\documentclass{amsart}%
\usepackage{hyperref}
\usepackage{amsmath}%
\usepackage{amsfonts}%
\usepackage{amssymb}%
\usepackage{amsthm}
\usepackage{todonotes}
\usepackage{tikz}
\usepackage{enumitem}
\usepackage{comment}

\renewcommand{\hat}{\widehat}
\renewcommand{\bar}{\overline}

\renewcommand{\S}{\mathfrak{S}}
\newcommand{\bl}{\mathrm{bl}}

\DeclareMathOperator{\supp}{Supp}
\DeclareMathOperator{\udeg}{udeg}
\DeclareMathOperator{\ddeg}{ddeg}

\newtheorem{theorem}{Theorem}
\newtheorem{prop}[theorem]{Proposition}

\newtheorem{lemma}[theorem]{Lemma}

\theoremstyle{definition}
\newtheorem{ex}[theorem]{Example}
\newtheorem{defin}[theorem]{Definition}
\newtheorem{quest}[theorem]{Question}

\theoremstyle{remark}
\newtheorem{remark}[theorem]{Remark}

\title{Self-dual intervals in the Bruhat order}
\author{Christian Gaetz}
\thanks{C.G. was partially supported by an NSF Graduate Research Fellowship under grant No. 1122374} 
\address{Department of Mathematics, Massachusetts Institute of Technology, Cambridge, MA.} 
\email{\href{mailto:gaetz@mit.edu}{gaetz@mit.edu}} 

\author{Yibo Gao} 
\email{\href{mailto:gaoyibo@mit.edu}{gaoyibo@mit.edu}}
\date{\today}



\begin{document}

\begin{abstract}
Bj\"{o}rner-Ekedahl \cite{Bjorner-Ekedahl} prove that general intervals $[e,w]$ in Bruhat order are ``top-heavy", with at least as many elements in the $i$-th corank as the $i$-th rank. Well-known results of Carrell \cite{Carrell} and of Lakshmibai-Sandhya \cite{Lakshmibai-Sandhya} give the equality case: $[e,w]$ is rank-symmetric if and only if the permutation $w$ avoids the patterns $3412$ and $4231$ and these are exactly those $w$ such that the Schubert variety $X_w$ is smooth.  

In this paper we study the finer structure of rank-symmetric intervals $[e,w]$, beyond their rank functions.  In particular, we show that these intervals are still ``top-heavy" if one counts cover relations between different ranks.  The equality case in this setting occurs when $[e,w]$ is self-dual as a poset; we characterize these $w$ by pattern avoidance and in several other ways.     
\end{abstract}

\maketitle

\section{Introduction} \label{sec:intro}


We say a complex projective variety $X$ has a \emph{cellular decomposition} if $X$ is covered by the disjoint open sets $\{C_i\}$, each isomorphic to affine space of some dimension, and such that each boundary $\bar{C_j} \setminus C_j$ is a union of some of the $\{C_i\}$.  Given a variety with such a decomposition, it is natural, following Stanley \cite{Stanley-hard-lefschetz}, to define a partial order $Q^X$ on the $\{C_i\}$ by setting $C_i \leq C_j$ whenever $C_i \subseteq \bar{C_j}$.  

When $X=G/B$, the quotient of a complex semisimple algebraic group by a Borel subgroup, the Bruhat decomposition 
\[
G=\bigsqcup_{w \in W} BwB
\]
induces a cellular decomposition $\{BwB/B \: | \: w \in W\}$ of $X$, where $W$ is the Weyl group of $G$.  In this case the partial order $Q^X$ on $W$ is the well known \emph{Bruhat order}.  For $w \in W$ the closure $X_w=\bar{BwB}/B$ itself has the cellular decomposition $\{BuB/B \: | \: u \in W, \: u \leq w\}$, and so its poset of cells $Q^{X_w}$ is the interval $[e,w]$ in Bruhat order on $W$ below the element $w$.  The varieties $X_w$ are called \emph{Schubert varieties}.  

Much of the structure of the Bruhat order is well-understood combinatorially; see Section \ref{sec:background} for some basic definitions and results.  It is graded with the rank of an element $w$ being the length $\ell(w)$ in the Weyl group, it has minimal element $e$, the identity element of $W$ and maximal element $w_0$, the longest element of $W$.  A great deal of work has been done on the structure of intervals $[e,w]$ in Bruhat order \cite{Bjorner-CW, Bjorner-Wachs-shellable, Tenner}.  Most of this paper will focus on the ``type $A_{n-1}$" case, where the Weyl group $W$ is the symmetric group $\S_n$.

For $w\in W$ and $k=0,1,\ldots,\ell(w)$, let
\[
P_k^w:=\{u\leq w:\ell(u)=k\}.
\]
We call this set the \emph{$k$-th rank of $[e,w]$} and call $P_{\ell(w)-k}^w$ the \emph{$k$-th corank}.  When the element $w$ is well understood, we may simplify our notation and just write $P_k$ instead. We have $P_0^w=\{e\}$ and $P_{\ell(w)}^w=\{w\}$.  Let $\Gamma_w$ (resp. $\Gamma^w$) denote the bipartite graph on $P^w_1 \sqcup P^w_2$ (resp. $P^w_{\ell(w)-1} \sqcup P^w_{\ell(w)-2}$) with edges given by cover relations in Bruhat order (see Figure \ref{fig:34521-graphs} for an example).

\begin{theorem}[Bj\"{o}rner and Ekedahl \cite{Bjorner-Ekedahl}]
\label{thm:Bjorner-top-heavy}
Bruhat intervals are ``top-heavy", that is, for all $0 \leq k \leq \ell(w)/2$,
\[
|P^w_k| \leq |P^w_{\ell(w)-k}|.
\]
\end{theorem}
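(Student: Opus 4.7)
The plan is to translate the combinatorial statement into an inequality on the cohomology of the Schubert variety $X_w$. The Schubert cells $BuB/B$ for $u \leq w$ endow $X_w$ with a cellular decomposition in which every cell has even real dimension $2\ell(u)$, so cellular cohomology gives $H^{\mathrm{odd}}(X_w,\mathbb{Q}) = 0$ and $\dim H^{2k}(X_w,\mathbb{Q}) = |P_k^w|$. The theorem then reduces to the hard-Lefschetz-type inequality
\[
\dim H^{2k}(X_w,\mathbb{Q}) \;\leq\; \dim H^{2(\ell(w)-k)}(X_w,\mathbb{Q}) \qquad \text{for } k \leq \ell(w)/2
\]
on the possibly singular projective variety $X_w$.

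\textbf{Strategy via intersection cohomology.} I would deduce this inequality from hard Lefschetz on the intersection cohomology $IH^*(X_w,\mathbb{Q})$, which holds because $X_w$ is projective: for $L \in H^2(X_w,\mathbb{Q})$ the class of an ample line bundle, multiplication by $L^{\ell(w)-2k}$ is an isomorphism $IH^{2k}(X_w,\mathbb{Q}) \xrightarrow{\sim} IH^{2(\ell(w)-k)}(X_w,\mathbb{Q})$. The next step is to construct a natural graded map $\iota: H^*(X_w,\mathbb{Q}) \to IH^*(X_w,\mathbb{Q})$ which is (a) injective, and (b) a morphism of $H^*(X_w,\mathbb{Q})$-modules (with $IH^*$ carrying the usual cup-product action). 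Granted (a) and (b), the subspace $\iota H^*(X_w,\mathbb{Q})$ is stable under cup product with $L$, and restricting hard Lefschetz yields an injection $L^{\ell(w)-2k}: H^{2k}(X_w,\mathbb{Q}) \hookrightarrow H^{2(\ell(w)-k)}(X_w,\mathbb{Q})$, giving the desired bound.

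\textbf{Construction and main obstacle.} To build $\iota$, I would fix a Bott--Samelson resolution $\pi: Z_w \to X_w$ (smooth projective, of complex dimension $\ell(w)$) and apply the Beilinson--Bernstein--Deligne decomposition theorem to obtain a splitting $R\pi_*\mathbb{Q}_{Z_w}[\ell(w)] \cong IC_{X_w} \oplus \mathcal{F}$, where $\mathcal{F}$ is a direct sum of shifted $IC$-sheaves supported on proper closed subvarieties of $X_w$. Passing to hypercohomology yields $H^*(Z_w,\mathbb{Q}) \cong IH^*(X_w,\mathbb{Q}) \oplus H^*(X_w,\mathcal{F})$, and I would define $\iota$ as the composition of the pullback $\pi^*: H^*(X_w,\mathbb{Q}) \to H^*(Z_w,\mathbb{Q})$ (injective because $\pi$ is proper and birational with $Z_w$ smooth) with the projection onto the first summand. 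The module compatibility (b) is automatic from naturality of the cup product. The hard part --- and the genuine obstacle in the proof --- is verifying injectivity of this composition: one must rule out the possibility that $\pi^* H^*(X_w,\mathbb{Q})$ meets the complementary summand $H^*(X_w,\mathcal{F})$ nontrivially. I expect this to follow from the parity vanishing of the stalks of $IC_{X_w}$ on Schubert varieties, a classical result from Kazhdan--Lusztig theory, which forces all cohomology in sight to live in even degrees and prevents any mixing between the two summands that would destroy injectivity.
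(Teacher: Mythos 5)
This theorem is quoted in the paper as a known result of Bj\"{o}rner and Ekedahl; the paper contains no proof of it, so the only meaningful comparison is with the argument in the cited source. Your overall strategy --- hard Lefschetz for the intersection cohomology of the projective variety $X_w$, combined with an injective map of $H^*(X_w,\mathbb{Q})$-modules $H^*(X_w,\mathbb{Q})\hookrightarrow IH^*(X_w,\mathbb{Q})$ --- is exactly the Bj\"{o}rner--Ekedahl approach, and the reduction of the combinatorial statement to the injectivity of $L^{\ell(w)-2k}$ on $H^{2k}(X_w,\mathbb{Q})$ is correct as you set it up.

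The gap is in your justification of injectivity, in two places. First, for a singular projective variety the pullback $\pi^*$ along a resolution need \emph{not} be injective on rational cohomology: for the nodal cubic $X$ with normalization $\mathbb{P}^1$ one has $H^1(X,\mathbb{Q})=\mathbb{Q}$ mapping to $H^1(\mathbb{P}^1,\mathbb{Q})=0$. So ``proper and birational with smooth source'' is not a reason. Second, parity vanishing cannot rule out $\pi^*H^*(X_w,\mathbb{Q})$ meeting the complementary summand $\mathbb{H}^*(X_w,\mathcal{F})$: both summands of $H^*(Z_w,\mathbb{Q})$ are already concentrated in even degrees, so parity gives no information about how a class in $H^{2k}(Z_w,\mathbb{Q})$ distributes between them. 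The ingredient you actually need is \emph{purity}, which is where the cellular decomposition enters: because $X_w$ is paved by affine cells, $H^{2k}(X_w,\mathbb{Q})$ is spanned by classes of closures of cells, hence is pure of weight $2k$ (type $(k,k)$). The canonical map $H^{2k}(X)\to IH^{2k}(X)$ (which exists intrinsically from $\mathbb{Q}_X[\dim X]\to IC_X$, with no need to choose a resolution or a splitting) is a morphism of mixed Hodge structures whose kernel lies in $W_{2k-1}H^{2k}(X)$, since $IH^{2k}$ of a projective variety is pure of weight $2k$ and morphisms of mixed Hodge structures are strict for the weight filtration. Purity of $H^{2k}(X_w,\mathbb{Q})$ then forces this kernel to vanish. (Bj\"{o}rner--Ekedahl run the same argument with $\ell$-adic cohomology and Frobenius weights, which also lets them treat Kac--Moody Schubert varieties.) With the injectivity step repaired in this way, the rest of your argument goes through.
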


Given a permutation $\pi \in \S_m$, we say $w \in \S_n$ \emph{avoids} $\pi$ if there are no indices $1 \leq i_1 < \cdots < i_m \leq n$ such that $w(i_1),\ldots,w(i_m)$ are in the same relative order as $\pi(1),\ldots, \pi(m)$.

\begin{theorem}[Carrell; Lakshmibai and Sandhya \cite{Carrell, Lakshmibai-Sandhya}]
\label{thm:smooth-conditions}
The following are equivalent for $w \in \S_n$:
\begin{enumerate}[label=S.\arabic*]
    \item \label{enum:rank-symmetric} the interval $[e,w]$ is rank-symmetric, that is,
    $|P^w_k| = |P^w_{\ell(w)-k}|$ for all $0 \leq k \leq \ell(w)/2$;
    \item \label{enum:smooth-patterns} $w$ avoids $3412$ and $4231$;
    \item \label{enum:smooth-variety} the Schubert variety $X_w$ is smooth.
\end{enumerate}
\end{theorem}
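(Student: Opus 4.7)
My plan is to establish the three-way equivalence cyclically, using distinct machinery for each direction: (S.3) $\Rightarrow$ (S.1) by Poincar\'e duality on a smooth projective variety, (S.1) $\Rightarrow$ (S.3) by the Carrell--Peterson criterion from Kazhdan--Lusztig theory, and (S.2) $\Leftrightarrow$ (S.3) by a tangent-space analysis at torus-fixed points.

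For (S.3) $\Rightarrow$ (S.1), I would use the Schubert cell decomposition $X_w = \bigsqcup_{u \leq w} BuB/B$, whose cells have real dimension $2\ell(u)$.  This yields $\dim H^{2k}(X_w;\mathbb{Q}) = |P^w_k|$ and vanishing odd cohomology.  If $X_w$ is smooth then, as a compact complex projective variety of complex dimension $\ell(w)$, it satisfies Poincar\'e duality, giving $|P^w_k| = |P^w_{\ell(w)-k}|$, which is (S.1).  For (S.1) $\Rightarrow$ (S.3), I would invoke the Carrell--Peterson theorem: the Poincar\'e polynomial $\sum_k |P^w_k| q^k$ is palindromic if and only if $X_w$ is rationally smooth at every point.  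Palindromicity is precisely (S.1), and in type $A$ rational smoothness coincides with smoothness for Schubert varieties (Deodhar), which delivers (S.3).

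For (S.2) $\Leftrightarrow$ (S.3), following Lakshmibai--Sandhya, I would use the $B$-action on $X_w$ to reduce smoothness to smoothness at each $T$-fixed point $uB/B$ with $u \leq w$.  At $u = e$ the tangent space $T_{eB/B} X_w$ has a basis indexed by positive roots $\alpha$ with reflection $s_\alpha \leq w$, i.e.\ by transpositions in $[e,w]$, so smoothness at $e$ is equivalent to this count equalling $\ell(w)$.  The combinatorial heart of the argument is that this equality, checked at every $T$-fixed point (equivalently, for $w$ and all relevant ``shifts'' of it by $u^{-1}$), holds if and only if $w$ avoids $3412$ and $4231$: each occurrence of one of these patterns forces an ``excess'' transposition in $[e,w]$ beyond those produced by any reduced word.

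The most involved step is (S.1) $\Rightarrow$ (S.3): the Carrell--Peterson criterion rests on the full machinery of Kazhdan--Lusztig polynomials $P_{u,w}$ and the equivalence of rational smoothness with $P_{u,w} = 1$ for all $u \leq w$; the coincidence of rational smoothness and smoothness in type $A$ is itself a non-trivial input.  By contrast, (S.2) $\Leftrightarrow$ (S.3) is combinatorially delicate but elementary once the tangent space description is in hand, and (S.3) $\Rightarrow$ (S.1) is a direct application of Poincar\'e duality.
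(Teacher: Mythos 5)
The paper does not prove Theorem~\ref{thm:smooth-conditions} at all --- it is quoted as a known result of Carrell and of Lakshmibai--Sandhya, so there is no in-paper argument to compare against. Your outline correctly reconstructs the standard chain of proofs from the literature: (\ref{enum:smooth-variety})$\Rightarrow$(\ref{enum:rank-symmetric}) via the cell decomposition computing Betti numbers plus Poincar\'e duality is right; (\ref{enum:rank-symmetric})$\Rightarrow$(\ref{enum:smooth-variety}) via the Carrell--Peterson palindromicity criterion for rational smoothness together with Deodhar's theorem that rational smoothness equals smoothness in type $A$ is the accepted route; and (\ref{enum:smooth-patterns})$\Leftrightarrow$(\ref{enum:smooth-variety}) via the tangent-space count $\dim T_{eB/B}X_w=\#\{t\in T: t\le w\}$ is exactly Lakshmibai--Sandhya's strategy. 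Two remarks. First, you do not need to check smoothness at every $T$-fixed point: the singular locus is closed and $B$-stable, and $eB/B$ lies in the closure of every $B$-orbit of $X_w$, so $X_w$ is smooth if and only if it is smooth at $eB/B$; this is the usual simplification. Second, be aware that what you have written is an assembly of large cited theorems rather than a proof: the genuinely hard combinatorial content of (\ref{enum:smooth-patterns})$\Leftrightarrow$(\ref{enum:smooth-variety}) --- that $\#\{(i,j): t_{ij}\le w\}=\ell(w)$ holds if and only if $w$ avoids $3412$ and $4231$, in both directions --- is asserted in one sentence (``each occurrence \dots forces an excess transposition'') without the counting argument, and the converse direction (avoidance implies no excess) is not addressed at all. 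Likewise (\ref{enum:rank-symmetric})$\Rightarrow$(\ref{enum:smooth-variety}) leans entirely on Kazhdan--Lusztig positivity and Deodhar's type-$A$ theorem as black boxes. As a roadmap the proposal is sound and accurately reflects how the theorem is proved in the sources the paper cites, but the delicate steps are invoked, not carried out.
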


Permutations satisfying the equivalent conditions of Theorem \ref{thm:smooth-conditions} are called \emph{smooth permutations}.   

Theorem \ref{thm:our-top-heavy} shows that, even when $[e,w]$ is rank-symmetric, so that Theorem~\ref{thm:Bjorner-top-heavy} does not give an asymmetry between ranks and coranks, the interval is still ``top heavy" if we also consider cover relations.  For $u \in [e,w]$ we write $\udeg_w(u)$ for the number of $v \in [e,w]$ covering $u$, and $\ddeg_w(u)$ for the number covered by $u$.  A poset is called \emph{self-dual} if it is isomorphic to its dual poset, which has the same elements with the order relation reversed.

\begin{theorem} \label{thm:our-top-heavy}
Let $w \in \S_n$ be a smooth permutation, then
\[
\max_{u \in P^w_1} \udeg_w(u) \leq \max_{u \in P^w_{\ell(w)-1}} \ddeg_w(u),
\]
with equality if and only if $[e,w]$ is self-dual.
\end{theorem}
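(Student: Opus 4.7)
Plan:

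The direction that self-duality of $[e,w]$ implies equality is immediate: any poset anti-automorphism of $[e,w]$ restricts to a bijection $P_1^w\to P_{\ell(w)-1}^w$ that exchanges $\udeg_w$ and $\ddeg_w$, so the maxima coincide. The content lies in the inequality for every smooth $w$ and in the converse of the equality statement.

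To attack the inequality, I would first derive a local formula for $\udeg_w(s_i)$. Any length-$2$ element covering $s_i$ has the form $s_is_j$ or $s_js_i$ for some $s_j\leq w$. For $|i-j|\geq 2$ the generators commute and yield a single element (automatically in $[e,w]$, since $s_i,s_j\leq w$); for $|i-j|=1$ there are two distinct candidates, and a Matsumoto/braid-move argument shows that both lie in $[e,w]$ exactly when the length-$3$ element $s_is_js_i$ (the longest element of the parabolic subgroup $\langle s_i,s_j\rangle\cong \S_3$) does. This yields
\[
\udeg_w(s_i) \;=\; |P_1^w| - 1 + [s_is_{i-1}s_i \leq w] + [s_is_{i+1}s_i \leq w]
\]
(with boundary terms zero when $s_{i\pm 1}$ does not exist), so in particular $\max_s \udeg_w(s)\leq |P_1^w|+1$.

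The dual analysis is more subtle: $\ddeg_w(v)$ for $v\in P_{\ell(w)-1}^w$ counts Bruhat covers of $v$ from below in $\S_n$, which equals $\ell(v)=\ell(w)-1$ precisely when $v$ avoids the pattern $321$ and is otherwise strictly smaller. My plan is to exhibit, for each extremal $s_i$ realizing $\max\udeg_w$, a coatom $v\in P_{\ell(w)-1}^w$ with $\ddeg_w(v)\geq \udeg_w(s_i)$. Natural candidates are $v=s_iw$ or $v=ws_i$ when $s_i$ is a left or right descent of $w$, but small examples (e.g.\ $s_2\in P_1^w$ for $w=34521$) show that the straightforward matching can give too small a down-degree; a more flexible choice, guided by the length-$3$ elements $s_is_{i\pm1}s_i$ appearing in the formula above and likely combined with a Billey--Postnikov-style decomposition of smooth $w$ to induct on smaller smooth intervals, appears to be necessary.

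The main obstacle is thus the construction of a coatom $v$ attaining the required down-degree for each extremal $s_i$, which I expect to be the substantive content of the proof. Given such a construction, the converse implication — equality forces self-duality — should follow by retracing the inequalities: equality at the maximum forces the matching to be tight throughout, which combined with the pattern-avoidance characterization of smoothness should impose further pattern-avoidance conditions on $w$, yielding an explicit anti-automorphism of $[e,w]$ and hence self-duality.
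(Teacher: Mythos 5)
Your easy direction (self-duality $\Rightarrow$ equality) and your formula $\udeg_w(s_i)=|P_1^w|-1+[s_is_{i-1}s_i\leq w]+[s_is_{i+1}s_i\leq w]$ are both correct, and the resulting bound $\max_u\udeg_w(u)\leq |P_1^w|+1$ matches the upper bound the paper uses (after reducing to $\bl(w)=1$). However, the proposal stops exactly where the theorem's content begins: you acknowledge that you do not have the construction of a coatom of large down-degree, and the converse of the equality statement is only a hope that ``retracing the inequalities'' will work. As written this is a plan with the two substantive steps missing, not a proof.

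Concretely, the paper's argument splits on whether $w$ contains one of $34521$, $45321$, $54123$, $54312$. If it avoids them, Theorem \ref{thm:main} gives self-duality and hence equality --- so the non-strict inequality in this case is \emph{not} obtained by matching atoms to coatoms, but imported wholesale from the self-duality theorem; your plan does not invoke this and would need it. If $w$ contains one of the four patterns, the paper produces a coatom $wt_{pq}$ with $\ddeg_w(wt_{pq})\geq |P_1^w|+2$, which is what you are missing. The ingredients are: (i) coatoms correspond to minimal inversions, and for any two minimal inversions $(i_1,j_1),(i_2,j_2)$ of a smooth $w$ the coatoms $wt_{i_1j_1},wt_{i_2j_2}$ have at least one common lower cover (here $3412$-avoidance is used to rule out the overlapping-rectangle configuration), and at least two common lower covers when $i_2=j_1$ (Lemma \ref{lem:adj-labels}, using $4231$-avoidance); (ii) since every rank-$2$ Bruhat interval is a diamond, $\ddeg_w(wt_{pq})=\sum_{(i,j)}|V_{(i,j)}|$ over the other $n-2$ minimal inversions, so one nonempty set per inversion plus three sets of size $\geq 2$ already gives $n+1$; (iii) a minimal occurrence of each of the four patterns (the region analysis of Lemma \ref{lem:length5patterns}) supplies a minimal inversion $(p,q)$ chained to at least three others. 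Finally, the equality case is then immediate: equality holds iff $w$ avoids the four patterns iff (by Theorem \ref{thm:main}) $[e,w]$ is self-dual. Without (i)--(iii) and without an appeal to Theorem \ref{thm:main}, neither the inequality for all smooth $w$ nor the converse of the equality statement is established.
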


Stanley wondered \cite{Stanley-hard-lefschetz} if the posets $Q^X$ for $X$ smooth are always self-dual (they are rank-symmetric by the Hard Lefschetz Theorem); although this is the case for many small examples, it is not true for the smooth Schubert variety $X_{34521}$ (see Figure \ref{fig:34521-graphs}).  Theorem \ref{thm:main} below characterizes self-dual intervals in Bruhat order on the symmetric group.

\begin{theorem}\label{thm:main}
The following are equivalent for $w \in \S_n$:
\begin{enumerate}[label=SD.\arabic*]
    \item \label{enum:graphs-isomorphic} the bipartite graphs $\Gamma_w$ and $\Gamma^w$ are isomorphic;
    \item \label{enum:polished-patterns} $w$ avoids the smooth patterns $3412$ and $4231$ from (\ref{enum:smooth-patterns}) as well as $34521$, $45321$, $54123$, and $54312$;
    \item \label{enum:w-is-polished} $w$ is polished (see Definition \ref{def:almost-parabolic});
    \item \label{enum:interval-self-dual} the interval $[e,w]$ in Bruhat order is self-dual.
\end{enumerate}
\end{theorem}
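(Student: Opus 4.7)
The plan is to establish the four conditions are equivalent via the cycle
$(\ref{enum:w-is-polished}) \Rightarrow (\ref{enum:interval-self-dual}) \Rightarrow (\ref{enum:graphs-isomorphic}) \Rightarrow (\ref{enum:polished-patterns}) \Rightarrow (\ref{enum:w-is-polished})$.

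The implication $(\ref{enum:interval-self-dual}) \Rightarrow (\ref{enum:graphs-isomorphic})$ is immediate: any poset anti-automorphism of $[e,w]$ exchanges $P_k^w$ with $P^w_{\ell(w)-k}$ and reverses every cover relation, so its restriction to $P_1^w \sqcup P_2^w$ is a bipartite graph isomorphism onto $P^w_{\ell(w)-1} \sqcup P^w_{\ell(w)-2}$. The equivalence $(\ref{enum:polished-patterns}) \Leftrightarrow (\ref{enum:w-is-polished})$ I would prove by directly unwinding Definition~\ref{def:almost-parabolic}: each failure of the polished condition should correspond to an embedded copy of one of the six listed patterns, and vice versa, giving a finite combinatorial check.

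For the implication $(\ref{enum:graphs-isomorphic}) \Rightarrow (\ref{enum:polished-patterns})$ I would argue the contrapositive. Suppose $w$ contains one of the six forbidden patterns. If it is $3412$ or $4231$, then $w$ is not smooth by Theorem~\ref{thm:smooth-conditions}, so $[e,w]$ is not rank-symmetric; the aim is to show this asymmetry already manifests either in the vertex counts $|P_1^w|$ vs $|P^w_{\ell(w)-1}|$ and $|P_2^w|$ vs $|P^w_{\ell(w)-2}|$ or in the edge counts of the two bipartite graphs, by leveraging explicit descriptions of these sets in terms of the inversion structure of $w$ near the embedded pattern. For the four remaining patterns $34521$, $45321$, $54123$, $54312$, $w$ is still smooth, so Theorem~\ref{thm:our-top-heavy} applies; it then suffices to produce, for each such pattern, an element of $P^w_{\ell(w)-1}$ whose down-degree strictly exceeds every up-degree attained in $P_1^w$, ruling out $\Gamma_w \cong \Gamma^w$ by comparison of maximum degrees.

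Finally, for $(\ref{enum:w-is-polished}) \Rightarrow (\ref{enum:interval-self-dual})$ I would proceed by induction on $\ell(w)$, using a structural decomposition of polished permutations implicit in Definition~\ref{def:almost-parabolic}. The expectation is that any polished $w$ can be expressed as a concatenation or parabolic product of smaller polished factors on (essentially) disjoint supports, inducing a decomposition of $[e,w]$ as a direct product of smaller Bruhat intervals, each self-dual by the inductive hypothesis; since a direct product of self-dual posets is self-dual, this would close the cycle. The main obstacle is $(\ref{enum:graphs-isomorphic}) \Rightarrow (\ref{enum:polished-patterns})$, especially for $3412$ and $4231$: turning a global rank asymmetry into a concrete local obstruction at the bottom (or top) two ranks requires careful control of how a single embedded pattern propagates to the rank-$1$ and rank-$2$ structure of $[e,w]$. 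For the four smooth forbidden patterns, Theorem~\ref{thm:our-top-heavy} is a powerful lever once strictness is verified, but the explicit exhibition of witnesses for each pattern is itself a delicate piece of the argument.
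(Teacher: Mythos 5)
Your cycle of implications matches the paper's, and the easy step (\ref{enum:interval-self-dual})$\Rightarrow$(\ref{enum:graphs-isomorphic}) is handled the same way, but two of your proposed arguments have genuine gaps. The most serious is (\ref{enum:w-is-polished})$\Rightarrow$(\ref{enum:interval-self-dual}). A polished element with a single connected block, $w=w_0(J)w_0(J\cap J')w_0(J')$ with $J\cap J'\neq\emptyset$, does \emph{not} decompose into factors with disjoint supports, and $[e,w]$ is \emph{not} a direct product of smaller Bruhat intervals. The disjoint-support reduction (Proposition \ref{prop:different-support-gives-product}) only separates the blocks $S_1,\ldots,S_k$; within a block, the best one gets is an order-preserving \emph{bijection} $W_J^{J\cap J'}\times W_{J'}\to[e,w]$ coming from a BP-decomposition, and this is explicitly not a poset isomorphism because $[e,w]$ has extra cover relations not visible in the product (Remark \ref{rem:not-equal-to-product}). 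The paper must therefore write down an explicit candidate antiautomorphism $u\mapsto w_0(J)\,u^{J'}\,w_0(J\cap J')\,u_{J'}\,w_0(J')$ and verify by a length-additivity computation that it reverses the covers which change the $W^{J'}$-component; this verification, which uses that $J\cap J'$ is totally disconnected, is the technical core of the implication and cannot be replaced by an induction through direct products. Relatedly, your description of (\ref{enum:polished-patterns})$\Rightarrow$(\ref{enum:w-is-polished}) as a ``finite combinatorial check'' underestimates it: the paper needs an inductive structural decomposition (classifying $w$ into types according to the entries around position $1$ and $w^{-1}(1)$, peeling off $w_0(J)$ factors one at a time via Lemma \ref{lem:patternToDecomp}) to extract the sets $J_i,J_i'$ from pattern avoidance.

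For (\ref{enum:graphs-isomorphic})$\Rightarrow$(\ref{enum:polished-patterns}) there are two further issues. First, invoking Theorem \ref{thm:our-top-heavy} is circular as the paper is organized, since that theorem is deduced from Theorem \ref{thm:main}; you would need to prove the strict-inequality half of it independently (which is possible --- it rests on counting minimal inversions and on Lemma \ref{lem:adj-labels} --- but it is exactly the ``delicate piece'' you defer). Second, for the pattern $3412$ your plan to detect the failure purely through vertex or edge counts of $\Gamma_w$ and $\Gamma^w$ is not known to suffice: the paper's argument in the cases where the region $C_2$ of a minimal $3412$ occurrence is nonempty does not compare counts at all, but uses a finer invariant of the graph isomorphism, namely that two minimal inversions of the form $(p,q),(q,r)$ have \emph{two} common neighbors in $P^w_{\ell(w)-2}$ and hence must be matched to simple reflections $s_i,s_j$ with $|i-j|=1$; one then derives a contradiction from a simple reflection needing too many ``adjacent'' labels. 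Without this adjacent-labels mechanism (or an argument that counts alone always detect $3412$), your contrapositive does not close.
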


\begin{remark}
In Section \ref{sec:proof-of-3-4} we prove that (\ref{enum:w-is-polished})$\Rightarrow$(\ref{enum:interval-self-dual}) in general finite Coxeter groups, however in Section \ref{sec:counterexamples} we give counterexamples to the other implications in general Coxeter groups.
\end{remark}

The equivalence of (\ref{enum:graphs-isomorphic}) and (\ref{enum:interval-self-dual}) is notable because it implies that self-duality of $[e,w]$ may demonstrated by comparing only two pairs of ranks and coranks.  This is in contrast to the case of rank-symmetry, where Billey and Postnikov \cite{Billey-Postnikov} conjecture that one must check that $|P_i^w|=|P_{\ell(w)-i}^w|$ for around the first $r$ pairs of ranks and coranks, where $r$ is the rank of the Weyl group.  In particular, (\ref{enum:graphs-isomorphic}) gives a new sufficient (but not necessary) condition for the smoothness of $X_w$ which may be checked by comparing only two pairs of ranks and coranks.  See \cite{Richmond-Slofstra-triangle} for discussion of a similar problem in certain infinite Coxeter groups.

\

The remainder of the paper is organized as follows.  In Section \ref{sec:background} we recall background on Bruhat order and give the definition of polished elements. Section \ref{sec:proof} gives the proof of Theorem \ref{thm:main} and Theorem~\ref{thm:our-top-heavy}, with each implication in Theorem~\ref{thm:main} (\ref{enum:graphs-isomorphic})$\Rightarrow$(\ref{enum:polished-patterns}), (\ref{enum:polished-patterns})$\Rightarrow$(\ref{enum:w-is-polished}), and (\ref{enum:w-is-polished})$\Rightarrow$(\ref{enum:interval-self-dual}) occupying a subsection and the proof of Theorem~\ref{thm:our-top-heavy} occupying the last subsection. Finally, Section \ref{sec:counterexamples} shows that Theorem \ref{thm:main} does not extend to other finite Coxeter groups.

\section{Background and definitions} \label{sec:background}

Let $(W,S)$ be a finite Coxeter system; we write $\Delta_S$ for the associated Dynkin diagram (see Bj\"{o}rner and Brenti \cite{Bjorner-Brenti} for basic results and definitions).  For $w\in W$, the \emph{length} $\ell(w)$ is the shortest possible length for an expression $w=s_1\cdots s_{\ell}$ with the $s_i \in S$; such an expression for $w$ of minimal length is called a \emph{reduced expression} or \emph{reduced decomposition}.  The \emph{parabolic subgroup} $W_J$ for $J \subseteq S$ is the subgroup generated by $J$, and $(W_J,J)$ is a Coxeter system.  The unique element of maximum length in $W_J$ is denoted $w_0(J)$.  Each left coset $wW_J$ (resp. right coset $W_J w$) of $W_J$ in $W$ has a unique representative $w^J$ (resp. $^Jw$) of minimal length, and the set of these representatives is the \emph{parabolic quotient} $W^J$ (resp. $^JW$).  Given $J \subseteq S$, each element $w\in W$ may be uniquely written $w=w^Jw_J$ with $w^J \in W^J$ and $w_J \in W_J$ (resp. $w=$ $_Jw ^J w$ with $^Jw$ in $^JW$ and $_Jw$ in  $W_J$) with $J$ and this decomposition satisfies $\ell(w)=\ell(w^J)+\ell(w_J)$; whenever we write an element $w$ as a product of two elements whose lengths sum to $\ell(w)$, we say this product is \emph{length-additive}.  The \emph{support} $\supp(w)$ is the set of $s \in S$ appearing in a given reduced expression for $w$ (it is known that the support does not depend on the reduced expression).

The elements of $T=\{wsw^{-1} \: | \: w \in W, s \in S\}$ are called \emph{reflections}.  For $w \in W$ and $t \in T$, we write $w \leq wt$ whenever $\ell(wt)>\ell(w)$; the \emph{Bruhat order} on $W$ is the transitive closure of this relation.  The Bruhat order is graded, with rank function given by $\ell$, has unique minimal element $e$ and unique maximal element $w_0=w_0(S)$.  If above we instead require that $t \in S$, the resulting partial order is called the \emph{right weak order}, denoted $\leq_R$ (if we require that $t \in S$ and multiply on the left, we obtain the \emph{left weak order} $\leq_L$ on $W$).  We write $[u,w]$ for the interval between $u$ and $w$ in Bruhat order, and $[u,w]_L$ and $[u,w]_R$ for intervals in left and right weak orders, respectively; we also write $[u,w]^J$ for $[u,w] \cap W^J$.

\begin{prop}[See, e.g. \cite{Bjorner-Brenti}]
\label{prop:parabolic-projection-preserves-bruhat}
The map $u \mapsto u^J$ from $W \to W^J$ preserves Bruhat order.
\end{prop}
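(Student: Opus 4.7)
The plan is to combine the subword characterization of Bruhat order with the length-additive factorization $w = w^J \cdot w_J$ recalled in the background section. Concretely, given $u \leq v$ in $W$, I will produce a reduced expression for $u$ sitting inside a carefully chosen reduced expression for $v$, split it at the $W^J$/$W_J$ boundary, and read off the desired inequality $u^J \leq v^J$.

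First, fix reduced expressions $v^J = s_{a_1}\cdots s_{a_p}$ and $v_J = s_{b_1}\cdots s_{b_q}$, noting that each $s_{b_j}$ lies in $J$. Because the factorization $v = v^J v_J$ is length-additive, their concatenation is a reduced expression for $v$. The subword property of Bruhat order then provides a reduced expression for $u$ appearing as a subword of this one; write it as $xy$, where $x$ collects the chosen letters from the $v^J$-block and $y$ the chosen letters from the $v_J$-block. Then $u = xy$ with $\ell(u) = \ell(x) + \ell(y)$, the expression for $x$ is a reduced subword of a reduced expression for $v^J$ (so $x \leq v^J$ in Bruhat order), and $y \in W_J$ since all of its letters lie in $J$.

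Since $y \in W_J$, the cosets satisfy $uW_J = xW_J$, and hence $u^J = x^J$. The length-additive decomposition $x = x^J \cdot x_J$ yields $x^J \leq x$ in Bruhat order: right-multiplying $x^J$ successively by the letters of a reduced expression for $x_J$ produces a chain of covers, since each step increases length by one by length-additivity. Chaining these observations gives $u^J = x^J \leq x \leq v^J$, which is the desired relation in $W^J$. The one point requiring care is the split of the chosen subword at the boundary between the two blocks: since the full chosen subword is reduced, both its prefix and its suffix are automatically reduced expressions for their respective products, which is exactly what makes the comparisons $x \leq v^J$ and $y \in W_J$ work cleanly. Beyond this, the argument is bookkeeping on cosets and length-additive factorizations.
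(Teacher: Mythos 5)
Your argument is correct. The paper states this proposition without proof, citing Bj\"{o}rner--Brenti, so there is nothing to compare against; your subword-based argument is a complete and standard way to establish it. Each step checks out: the concatenated expression for $v=v^Jv_J$ is reduced by length-additivity, the subword property yields a reduced word $xy$ for $u$ split at the block boundary, $x\leq v^J$ again by the subword property, $u^J=x^J$ because $y\in W_J$ fixes the coset, and $x^J\leq x$ since a reduced word for $x^J$ is a prefix of one for $x$. (For reference, the proof in Bj\"{o}rner--Brenti proceeds instead by induction on $\ell(v)$ via the lifting property; your route is more direct given that the paper already records the subword characterization as Proposition~\ref{prop:subword-property}.)
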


The \emph{right inversion set} $T_R(w)$ of $w \in W$ is $\{t \in T \: | \: \ell(wt)<\ell(w)\}$; the \emph{right descent set} is $D_R(w)=T_R(w) \cap S$.  We similarly define left inversions and descents by multiplying by $t$ on the left.  It is not hard to check that 
\[
W^J = \{w \in W \: | \: D_R(w) \subseteq S \setminus J\}
\]
and that $D_R(w_0(J))=D_L(w_0(J))=J$.  It is well known that $s \in D_R(w)$ (resp. $s \in D_L(w)$) if and only if $w$ has a reduced expression ending with $s$ (resp. beginning with $s$).

The following characterization of Bruhat order is well known.

\begin{prop} \label{prop:subword-property}
Let $u,w \in W$, then $u \leq w$ if and only if for some (equivalently, for any) reduced expression $w=s_{1} \cdots s_{\ell}$ there is a substring $s_{i_1} \cdots s_{i_k}$ with $i_1 < \cdots < i_k$ which is a reduced expression for $u$.
\end{prop}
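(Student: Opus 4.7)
This is a classical characterization; my plan is to derive both directions from the \emph{Strong Exchange Condition}, which I would take as the main technical input: if $w = s_1 \cdots s_\ell$ is any (possibly non-reduced) expression and $t \in T$ satisfies $\ell(wt) < \ell(w)$, then $wt = s_1 \cdots \widehat{s_i} \cdots s_\ell$ for some $i$, and when the original expression is reduced this shortened expression is reduced as well. Once both implications are established, the equivalence of ``some'' and ``any'' reduced expression comes for free, since the relation $u \leq w$ itself does not depend on a choice of reduced expression for $w$.

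For the ``only if'' direction, I would induct on $\ell(w) - \ell(u)$. The base case $u = w$ is immediate. Otherwise choose $v$ with $u \leq v \lessdot w$, so $v = wt$ for some reflection $t$ with $\ell(v) = \ell(w) - 1$. Fixing a reduced expression $w = s_1 \cdots s_\ell$, Strong Exchange yields an index $i$ with $v = s_1 \cdots \widehat{s_i} \cdots s_\ell$ reduced; the inductive hypothesis applied to $u \leq v$ produces a reduced subword of this expression representing $u$, which is automatically a subword of $s_1 \cdots s_\ell$.

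For the ``if'' direction, I would induct on $\ell(w)$. Set $s = s_\ell$ and suppose $s_{i_1} \cdots s_{i_k}$ is a reduced subword expressing $u$. If $i_k < \ell$, the subword already lies in $s_1 \cdots s_{\ell-1}$, a reduced expression for $ws$, so by induction $u \leq ws \lessdot w$. If $i_k = \ell$, write $u = u' s$ with $u' = s_{i_1} \cdots s_{i_{k-1}}$ reduced of length $\ell(u) - 1$; induction yields $u' \leq ws$, and since $s \in D_R(w)$ while $s \notin D_R(u')$, the lifting property (a standard corollary of Strong Exchange) gives $u = u' s \leq w$. The only genuine obstacle in this plan is the Strong Exchange Condition itself, which I would import from \cite{Bjorner-Brenti}, where it is established by tracking the reflection $t$ along the reduced expression via root system combinatorics.
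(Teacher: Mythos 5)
The paper offers no proof of this proposition: it is stated as ``well known'' background and implicitly deferred to \cite{Bjorner-Brenti} (where it is the Subword Property, Theorem 2.2.2), so there is nothing internal to compare against. Your argument is the standard textbook derivation from the Strong Exchange Condition, and it is essentially correct. Two points deserve care, though. First, in the ``only if'' direction you ``choose $v$ with $u \leq v \lessdot w$''; with Bruhat order defined as the transitive closure of $w \leq wt$ for $\ell(wt) > \ell(w)$, the existence of a chain refining into covers is itself usually deduced \emph{from} the subword property, so you cannot assume covers yet. The fix is harmless: take $v = wt$ to be the last link of any defining chain, so only $\ell(v) < \ell(w)$ is known; Strong Exchange gives $v = s_1 \cdots \hat{s_i} \cdots s_\ell$, which need not be reduced, and you then invoke the Deletion Condition to extract a reduced subword of it, and induct on chain length (or on $\ell(w)$). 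Second, the lifting property you use in the ``if'' direction (from $u' \leq ws$, $s \in D_R(w)$, $s \notin D_R(u')$ conclude $u's \leq w$) is derived in \cite{Bjorner-Brenti} \emph{after} and \emph{from} the subword property, so quoting it from there would be circular; you must instead establish it directly from Strong Exchange (as is done, e.g., in Humphreys' development), which is possible but is a genuine extra lemma rather than an immediate corollary. With those two repairs made explicit, the proof is complete and is the expected one.
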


\subsection{Billey-Postnikov decompositions}

Let $w \in (W,S)$ and $J \subseteq S$, we say the parabolic decomposition $w=w^J w_J$ is a \emph{Billey-Postnikov decomposition} (or \emph{BP-decomposition}) if 
\[
\supp(w^J) \cap J \subseteq D_L(w_J).
\]

For any $u \in W$ and any $J \subseteq S$, it was shown in \cite{Billey-Fan-Losonczy} that 
\[
[e,u] \cap W_J = [e,m(u,J)]
\]
for some element $m(u,J)\in W$, and we take this as the definition of $m(u,J)$.

\begin{prop}[Richmond and Slofstra \cite{Richmond-Slofstra-fibre-bundles}] \label{prop:bp-implies-parabolic-is-maximal}
If the parabolic decomposition $u=u^Ju_J$ is a BP-decomposition, then $u_J=m(u,J)$.
\end{prop}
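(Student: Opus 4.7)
The object $m(u,J)$ is defined to be the unique Bruhat-maximum of $[e,u] \cap W_J$, so the statement amounts to showing that $u_J$ is this maximum. Since $u = u^J u_J$ is length-additive, Proposition~\ref{prop:subword-property} applied to the concatenated reduced expression for $u$ immediately gives $u_J \leq u$, and $u_J \in W_J$ by construction. Hence $u_J \in [e,u] \cap W_J = [e, m(u,J)]$, i.e.\ $u_J \leq m(u,J)$, and the content of the proposition lies entirely in the reverse inequality: every $v \in [e,u] \cap W_J$ must satisfy $v \leq u_J$.

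Fix such a $v$. The plan is to apply the subword property twice. First, a reduced expression for $u$ obtained by concatenating reduced expressions for $u^J$ and $u_J$ decomposes $v = v_1 v_2$ length-additively with $v_1 \leq u^J$ and $v_2 \leq u_J$. Since both $v$ and $v_2$ lie in $W_J$, so does $v_1$, hence $\supp(v_1) \subseteq \supp(u^J) \cap J$. The BP-hypothesis then forces $\supp(v_1) \subseteq D_L(u_J)$; set $J' := D_L(u_J)$, so that $v_1 \in W_{J'}$ and in particular $v_1 \leq w_0(J')$.

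Next, I would ``absorb'' $v_1$ into a reduced expression for $u_J$. The standard fact that $J' \subseteq D_L(u_J)$ implies $w_0(J') \leq_R u_J$ furnishes a length-additive factorization $u_J = w_0(J') \cdot u'$. A second application of the subword property, now to $v_2 \leq u_J$ along this factorization, decomposes $v_2 = v_2^{(1)} v_2^{(2)}$ length-additively with $v_2^{(1)} \leq w_0(J')$ and $v_2^{(2)} \leq u'$. Then $v_1 v_2^{(1)} \in W_{J'}$, so $v_1 v_2^{(1)} \leq w_0(J')$; splicing a subword of a reduced expression for $w_0(J')$ realizing $v_1 v_2^{(1)}$ with a subword of a reduced expression for $u'$ realizing $v_2^{(2)}$ exhibits $v$ as a subword of a reduced expression for $u_J$, yielding $v \leq u_J$.

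The main technical point is verifying that the rewritten factorization $v = (v_1 v_2^{(1)}) \cdot v_2^{(2)}$ is itself length-additive, so that the spliced word is actually reduced for $v$ and the subword property can be applied. This reduces to checking equality in a short chain of triangle inequalities for the length function, using length-additivity of the original decompositions $v = v_1 v_2$ and $v_2 = v_2^{(1)} v_2^{(2)}$ together with the obvious inequality $\ell(v_1 v_2^{(1)}) \leq \ell(v_1) + \ell(v_2^{(1)})$. Once this is in hand, Proposition~\ref{prop:subword-property} finishes the argument, and specializing to $v = m(u,J)$ yields $m(u,J) \leq u_J$ and hence the desired equality.
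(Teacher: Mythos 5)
The paper does not prove this proposition at all --- it is quoted from Richmond--Slofstra \cite{Richmond-Slofstra-fibre-bundles} --- so there is nothing internal to compare against; your argument has to stand on its own, and it does. The easy inclusion $u_J \leq m(u,J)$ is fine, and the reverse direction is sound: splitting $v \leq u$ along the concatenated reduced word for $u^J u_J$ gives a length-additive $v = v_1 v_2$ with $v_1 \leq u^J$; since $v, v_2 \in W_J$ you correctly get $v_1 \in W_J$, and monotonicity of support under Bruhat order gives $\supp(v_1) \subseteq \supp(u^J) \cap J \subseteq D_L(u_J) = J'$, so $v_1 \in W_{J'}$. The factorization $u_J = w_0(J')\, u'$ (length-additive, from $J' \subseteq D_L(u_J)$) and the second subword split $v_2 = v_2^{(1)} v_2^{(2)}$ are both standard, and the length-additivity of $(v_1 v_2^{(1)}) \cdot v_2^{(2)}$ follows exactly as you indicate: $\ell(v) \leq \ell(v_1 v_2^{(1)}) + \ell(v_2^{(2)}) \leq \ell(v_1) + \ell(v_2^{(1)}) + \ell(v_2^{(2)}) = \ell(v)$ forces equality throughout, so splicing a reduced subword of $w_0(J')$ for $v_1 v_2^{(1)} \in W_{J'}$ with the subword for $v_2^{(2)}$ yields a reduced word for $v$ inside a reduced word for $u_J$, whence $v \leq u_J$. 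This is a complete and correct elementary proof of the cited result; if anything, you could spell out the one sentence verifying the displayed chain of inequalities rather than deferring it, but the logic is airtight.
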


\subsection{The symmetric group as a Coxeter group}

Much of the paper will focus on the case of the symmetric group $\S_n$, the Coxeter group of type $A_{n-1}$.  We make the conventions for the symmetric group that the simple generators are $S=\{s_1,...,s_{n-1}\}$ where $s_i$ is the adjacent transposition $(i \: i+1)$.  It is not hard to see that the reflections $T$ are exactly the transpositions $(i j)$, for which we sometimes write $t_{ij}$.

In this case descents and inversions correspond to the familiar notions by the same name which appear in the combinatorics of permutations.  Namely, for $w=w(1) \ldots w(n)$ in one-line notation, $(ij)$, $i<j$ is a right inversion of $w$ if $w(i)>w(j)$ and a right descent if this is true and $j=i+1$.  The length $\ell(w)$ is the number of inversions of $w$, and the longest element $w_0$ is the reversed permutation with one-line notation $n \: n-1 \cdots 2 \: 1$. 

\subsection{Polished elements}

We now define the \emph{polished elements} appearing in the statement of Theorem \ref{thm:main}.

\begin{defin} \label{def:almost-parabolic}
Let $(W,S)$ be a finite Coxeter system, we say that $w\in W$ is \textit{polished} if there exist pairwise disjoint subsets $S_1,...,S_k \subseteq S$ such that each $S_i$ is a connected subset of the Dynkin diagram and coverings $S_i=J_i \cup J_i'$ for $i=1,...,k$ with $J_i \cap J_i'$ totally disconnected so that
\[
w=\prod_{i=1}^k w_0(J_i)w_0(J_i \cap J_i')w_0(J_i')
\]
where the product is taken from left to right as $i=1,2,...,k$ (if the $S_j$ are reordered, we obtain a possibly different polished element).

In light of Theorem \ref{thm:main}, the word ``polished" is meant to indicate that these elements are even nicer than smooth elements.
\end{defin}

\begin{ex}
The following element (shown in Figure~\ref{fig:polished154963287}) with $k=2$, $J_1=\{s_8\}$, $J_1'=\emptyset$, $J_2=\{s_2,s_3,s_4,s_6,s_7\}$, $J_2'=\{s_4,s_5,s_6\}$, and multiplication in the order of 
\begin{align*}
w=&w_0(J_1)w_0(J_2)s_4s_6w_0(J_2')\\
=&123456798\cdot 154328769\cdot 123546789\cdot 123457689\cdot 123765489\\
=&154973268
\end{align*}
is a polished element. Notice that $J_2\cap J_2'=\{s_4,s_6\}$ is totally disconnected.

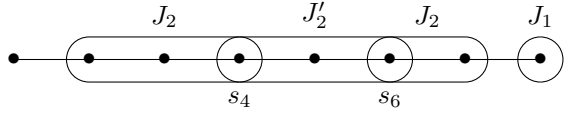
\begin{figure}[h!]
\centering
\begin{tikzpicture}[scale=1.0]
\draw(0,0)--(7,0);
\node at (0,0) {$\bullet$};
\node at (1,0) {$\bullet$};
\node at (2,0) {$\bullet$};
\node at (3,0) {$\bullet$};
\node at (4,0) {$\bullet$};
\node at (5,0) {$\bullet$};
\node at (6,0) {$\bullet$};
\node at (7,0) {$\bullet$};
\draw(1,0.3)--(3,0.3);
\draw(1,-0.3)--(3,-0.3);
\draw (1,0.3) arc (90:270:0.3);
\draw (3,-0.3) arc (-90:90:0.3);
\draw(3,0.3)--(5,0.3);
\draw(3,-0.3)--(5,-0.3);
\draw (3,0.3) arc (90:270:0.3);
\draw (5,-0.3) arc (-90:90:0.3);
\draw(5,0.3)--(6,0.3);
\draw(5,-0.3)--(6,-0.3);
\draw (5,0.3) arc (90:270:0.3);
\draw (6,-0.3) arc (-90:90:0.3);
\draw (7,0.3) arc (90:270:0.3);
\draw (7,-0.3) arc (-90:90:0.3);
\node[above] at (2,0.3) {$J_2$};
\node[above] at (4,0.3) {$J_2'$};
\node[above] at (5.5,0.3) {$J_2$};
\node[below] at (3,-0.3) {$s_4$};
\node[below] at (5,-0.3) {$s_6$};
\node[above] at (7,0.3) {$J_1$};
\end{tikzpicture}
\caption{A polished element 154963287 in $\S_9$.}
\label{fig:polished154963287}
\end{figure}

The permutation $34521 \in \S_5$, whose graphs $\Gamma_{34521}$ and $\Gamma^{34521}$ are shown in Figure \ref{fig:34521-graphs}, is \emph{not} polished.  This can be checked directly or seen to follow from Theorem \ref{thm:main}, since $\Gamma_{34521} \not \cong \Gamma^{34521}$.
\end{ex}

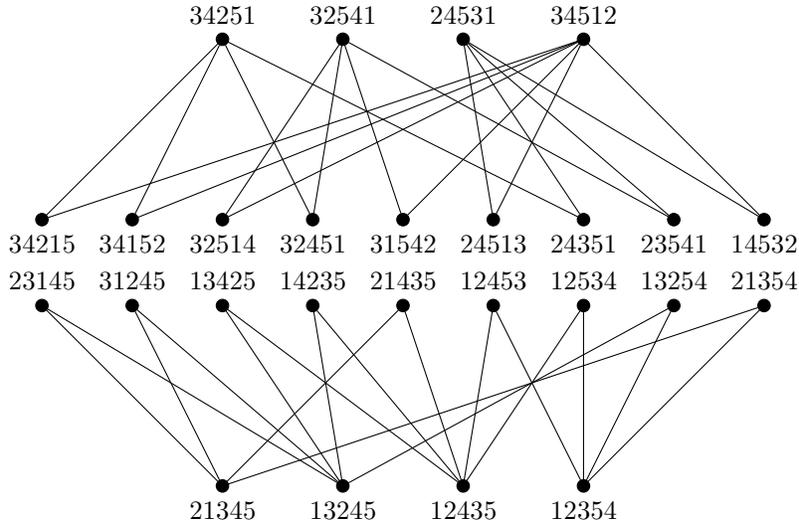
\begin{figure}
    \centering
    \begin{tikzpicture} [scale=0.8]
    \node[draw,shape=circle,fill=black,scale=0.5](b1)[label=above: {$34251$}] at (-3,3) {};
    \node[draw,shape=circle,fill=black,scale=0.5](b2)[label=above: {$32541$}] at (-1,3) {};
    \node[draw,shape=circle,fill=black,scale=0.5](b3)[label=above: {$24531$}] at (1,3) {};
    \node[draw,shape=circle,fill=black,scale=0.5](b4)[label=above: {$34512$}] at (3,3) {};
    
    \node[draw,shape=circle,fill=black,scale=0.5](a1)[label=below: {$34215$}] at (-6,0) {};
    \node[draw,shape=circle,fill=black,scale=0.5](a2)[label=below: {$34152$}] at (-4.5,0) {};
    \node[draw,shape=circle,fill=black,scale=0.5](a3)[label=below: {$32514$}] at (-3,0) {};
    \node[draw,shape=circle,fill=black,scale=0.5](a4)[label=below: {$32451$}] at (-1.5,0) {};
    \node[draw,shape=circle,fill=black,scale=0.5](a5)[label=below: {$31542$}] at (0,0) {};
    \node[draw,shape=circle,fill=black,scale=0.5](a6)[label=below: {$24513$}] at (1.5,0) {};
    \node[draw,shape=circle,fill=black,scale=0.5](a7)[label=below: {$24351$}] at (3,0) {};
    \node[draw,shape=circle,fill=black,scale=0.5](a8)[label=below: {$23541$}] at (4.5,0) {};
    \node[draw,shape=circle,fill=black,scale=0.5](a9)[label=below: {$14532$}] at (6,0) {};
    
    \foreach \y in {(a1),(a2),(a4),(a7)}{
    \draw (b1)--\y;
    }
    
    \foreach \y in {(a3),(a4),(a5),(a8)}{
    \draw (b2)--\y;
    }
    
    \foreach \y in {(a6),(a7),(a8),(a9)}{
    \draw (b3)--\y;
    }
    
    \foreach \y in {(a1),(a2),(a3),(a5),(a6),(a9)}{
    \draw (b4)--\y;
    }
    
    \end{tikzpicture}
    \centering
    \begin{tikzpicture}[scale=0.8]
    \node[draw,shape=circle,fill=black,scale=0.5](a1)[label=below: {$21345$}] at (-3,0) {};
    \node[draw,shape=circle,fill=black,scale=0.5](a2)[label=below: {$13245$}] at (-1,0) {};
    \node[draw,shape=circle,fill=black,scale=0.5](a3)[label=below: {$12435$}] at (1,0) {};
    \node[draw,shape=circle,fill=black,scale=0.5](a4)[label=below: {$12354$}] at (3,0) {};
    
    \node[draw,shape=circle,fill=black,scale=0.5](b1)[label=above: {$23145$}] at (-6,3) {};
    \node[draw,shape=circle,fill=black,scale=0.5](b2)[label=above: {$31245$}] at (-4.5,3) {};
    \node[draw,shape=circle,fill=black,scale=0.5](b3)[label=above: {$13425$}] at (-3,3) {};
    \node[draw,shape=circle,fill=black,scale=0.5](b4)[label=above: {$14235$}] at (-1.5,3) {};
    \node[draw,shape=circle,fill=black,scale=0.5](b5)[label=above: {$21435$}] at (0,3) {};
    \node[draw,shape=circle,fill=black,scale=0.5](b6)[label=above: {$12453$}] at (1.5,3) {};
    \node[draw,shape=circle,fill=black,scale=0.5](b7)[label=above: {$12534$}] at (3,3) {};
    \node[draw,shape=circle,fill=black,scale=0.5](b8)[label=above: {$13254$}] at (4.5,3) {};
    \node[draw,shape=circle,fill=black,scale=0.5](b9)[label=above: {$21354$}] at (6,3) {};
    
    \foreach \y in {(b1),(b2),(b5),(b9)}{
    \draw (a1)--\y;
    }
    
    \foreach \y in {(b1),(b2),(b3),(b4),(b8)}{
    \draw (a2)--\y;
    }
    
    \foreach \y in {(b3),(b4),(b5),(b6),(b7)}{
    \draw (a3)--\y;
    }
    
    \foreach \y in {(b6),(b7),(b8),(b9)}{
    \draw (a4)--\y;
    }
    
    \end{tikzpicture}
    \caption{The bipartite graphs $\Gamma^{34521}$ (top) and $\Gamma_{34521}$ (bottom).  Note that the graphs are not isomorphic.}
    \label{fig:34521-graphs}
\end{figure}

\section{Proof of Theorem~\ref{thm:main}} \label{sec:proof}
It is clear that (\ref{enum:interval-self-dual})$\Rightarrow$(\ref{enum:graphs-isomorphic}), as any antiautomorphism of $[e,w]$ induces an isomorphism $\Gamma_w \cong \Gamma^w$. We are going to show that (\ref{enum:graphs-isomorphic})$\Rightarrow$(\ref{enum:polished-patterns}), (\ref{enum:polished-patterns})$\Rightarrow$(\ref{enum:w-is-polished}) and (\ref{enum:w-is-polished})$\Rightarrow$(\ref{enum:interval-self-dual}) in the following sections.

\subsection{Proof of direction (\ref{enum:graphs-isomorphic})$\Rightarrow$(\ref{enum:polished-patterns})}\label{sub:1-2}

For $w\in \S_n$, let $\bl(w)$ be the largest $b\geq1$ such that $[n]:=\{1,2,\ldots,n\}$ can be partitioned into consecutive intervals $J_1\sqcup J_2\sqcup\cdots\sqcup J_b$ such that $w \cdot J_i=J_i$ for all $i=1,\ldots,b$. We write $w=w^{(1)}\oplus\cdots\oplus w^{(b)}$ where $w^{(i)}\in \S_{|J_i|}$ and say that $w$ has $\bl(w)$ \textit{blocks}. Equivalently, $\bl(w)$ is the cardinality of $S \setminus \supp(w)$, thus we see that $\bl(w)=n-|P_1^w|$.

\begin{defin}\label{def:minimal-inversion}
We say that an inversion $(i,j)$ of $w$ is \textit{minimal} if $i<j$, $w(i)>w(j)$ and there does not exist $k$ such that $i<k<j$ and $w(i)>w(k)>w(j)$.
\end{defin}
In other words, $(i,j)$ is a minimal inversion of $w$ if and only if $w t_{ij}$ is covered by $w$ is in the strong Bruhat order. So the minimal inversions of $w$ are in bijection with $P_{\ell(w)-1}^w$. We generalize this definition to minimal pattern containment.
\begin{defin}\label{def:pattern-contain}
We say that $w\in \S_n$ \textit{contains} pattern $\pi\in \S_k$ at indices $a_1<\cdots<a_k$ if $w(a_i)<w(a_j)$ if and only if $\pi(i)<\pi(j)$ for all $1\leq i<j\leq n$. We say that this occurrence of $\pi$ is \textit{minimal} if there does not exist an occurrence of the pattern $\pi$ at different indices $a_1'<\cdots<a_k'$ such that $a_1'\geq a_1$, $a_k'\leq a_k$, $\min_i w(a_i')\geq\min_i w(a_i)$, $\max_i w(a_i')\leq\max_i w(a_i)$ and at least one of these four inequalities is strict.
\end{defin}
\begin{ex}
The permutation 45321 contains the pattern 3421 at indices 1,2,4,5 but this containment is not minimal since 45321 also contains 3421 at indices 1,2,3,4.
\end{ex}

Notice that if $w\in \S_n$ contains $\pi\in \S_k$, then $w$ must have some minimal occurrence of $\pi$.

\begin{lemma}\label{lem:4231}
For $w\in \S_n$, we always have $|P_{\ell(w)-1}^w|\geq |P_{1}^w|$ and if $w$ contains the pattern $4231$, then $| P_{\ell(w)-1}^w|>|P_{1}^w|$. 
\end{lemma}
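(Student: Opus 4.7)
The plan is to construct an explicit injection $\phi : P_1^w \hookrightarrow P_{\ell(w)-1}^w$, which immediately yields $|P_1^w| \leq |P_{\ell(w)-1}^w|$, and then, assuming $w$ contains $4231$, to exhibit a specific minimal inversion of $w$ lying outside the image of $\phi$, forcing the strict inequality. I will identify $P_1^w$ with $\supp(w) \subseteq \{s_1, \ldots, s_{n-1}\}$ and $P_{\ell(w)-1}^w$ with the set of minimal inversions of $w$ (as noted after Definition~\ref{def:minimal-inversion}).

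First I will define $\phi(s_i)$ to be the minimal inversion $(a,b)$ with $a \leq i < b$ that minimizes $b$ and, subject to that, maximizes $a$. Well-definedness holds because $s_i \in \supp(w)$ forces $\{w(1), \ldots, w(i)\} \neq \{1, \ldots, i\}$, so some inversion $(a,b)$ with $a \leq i < b$ exists, and among such a pair minimizing $w(a) - w(b)$ is automatically minimal. The critical step is injectivity: if $\phi(s_i) = \phi(s_j) = (a, b)$ with $i < j$, then $j \in (a, b)$ and the minimal inversion property forces $w(j) < w(b)$ or $w(j) > w(a)$. In the first case, the inversion $(a, j)$ refines (by the same min-$b$/max-$a$ rule applied to the subinterval $[a, j]$) to a minimal inversion still straddling $i$ but with smaller $b$, contradicting the defining property of $\phi(s_i)$; symmetrically in the second case $(j, b)$ refines to a minimal inversion violating the min-$b$ or max-$a$ property of $\phi(s_j)$.

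For the strict inequality, I will choose a $4231$ occurrence $a_1 < a_2 < a_3 < a_4$ that lexicographically minimizes the triple $(a_4 - a_2,\ -a_3,\ a_1)$: first minimizing $a_4 - a_2$, then maximizing $a_3$, then minimizing $a_1$. The key claim is that under this choice each of $(a_1, a_3)$, $(a_2, a_4)$, and $(a_3, a_4)$ is a minimal inversion of $w$, since any $k$ witnessing non-minimality produces, via a position swap inside the 4-tuple, a new $4231$ occurrence improving one of the three criteria (for instance, $k \in (a_3, a_4)$ with $w(k) \in (w(a_4), w(a_2))$ yields either $(a_1, k, a_3, a_4)$ or $(a_1, a_2, a_3, k)$, each with strictly smaller $a_4 - a_2$; and $k \in (a_3, a_4)$ with $w(k) \in (w(a_2), w(a_3))$ yields $(a_1, a_2, k, a_4)$ with the same $a_4 - a_2$ but strictly larger $a_3$). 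I then show $(a_2, a_4) \notin \mathrm{im}(\phi)$: for $i \in [a_2, a_3 - 1]$ the minimal inversion $(a_1, a_3)$ straddles $i$ with smaller $b = a_3 < a_4$, and for $i \in [a_3, a_4 - 1]$ the minimal inversion $(a_3, a_4)$ straddles $i$ with the same $b = a_4$ but with larger $a = a_3 > a_2$. In both ranges $\phi(s_i) \neq (a_2, a_4)$, so $|P_1^w| = |\mathrm{im}(\phi)| \leq |P_{\ell(w)-1}^w| - 1$.

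The hard part will be calibrating the minimization scheme so that all three minimal inversions emerge simultaneously without the position-swap arguments for different cases conflicting with one another. Each of the three sub-claims is verified by a short case analysis on the position and value of a hypothetical violating $k$, but the whole argument hinges on the specific triple $(a_4 - a_2,\ -a_3,\ a_1)$ being tailored so that every relevant swap strictly improves exactly one coordinate, which is what rules out conflicts between the sub-claims.
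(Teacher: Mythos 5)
Your route is genuinely different from the paper's: the paper proves both inequalities by induction on $n$, deleting the first entry of $w$, splitting the remaining permutation into blocks, and counting minimal inversions inside each block plus those involving position $1$; you instead construct a direct, non-inductive injection from $\supp(w)$ into the set of minimal inversions and then exhibit a minimal inversion outside its image. The injection itself is sound. Your ``refinement'' observation --- among all inversions $(a',b')$ with $a'\le i<b'$ confined to a fixed box, the optimal one for the min-$b'$/max-$a'$ rule (or the one minimizing $w(a')-w(b')$) is automatically a minimal inversion of $w$ --- correctly gives both well-definedness and injectivity of $\phi$, and your claims that $(a_3,a_4)$ and $(a_2,a_4)$ are minimal inversions under the lexicographic choice of the $4231$ occurrence both check out.

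The claim that $(a_1,a_3)$ is a minimal inversion, however, is false. If $k$ witnesses non-minimality, i.e.\ $a_1<k<a_3$ and $w(a_3)<w(k)<w(a_1)$, the only new occurrences your swap produces are $(k,a_2,a_3,a_4)$ when $k<a_2$, which has a \emph{larger} first index and so does not contradict your minimization of $a_1$, and $(a_1,a_2,k,a_4)$ when $a_2<k<a_3$, which has a \emph{smaller} third index and so does not contradict your maximization of $a_3$. Concretely, for $w=54231$ your rule selects the occurrence $(1,3,4,5)$, and $(a_1,a_3)=(1,4)$ is not a minimal inversion because $w(2)=4$ lies strictly between $w(4)=3$ and $w(1)=5$. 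The damage is contained: for $i\in[a_2,a_3-1]$ you never need $(a_1,a_3)$ itself to be minimal, only that \emph{some} minimal inversion straddling $i$ has right endpoint at most $a_3<a_4$, and this follows by applying your own refinement step to the inversion $(a_1,a_3)$ inside the box $[a_1,a_3]$. With that weakening (and Claims A and B kept as stated) the proof is complete, but as written the ``key claim'' is wrong and no recalibration of the lexicographic scheme will make all three inversions minimal simultaneously; you should state and use only the weaker form.
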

\begin{remark}
The inequality $|P_{\ell(w)-1}^w|\geq |P_{1}^w|$ follows directly from Theorem A of \cite{Bjorner-Ekedahl}. We will still give the full proof here as the idea will also be useful later on. 
\end{remark}
\begin{proof}
Use induction on $n$. Let $a=\bl(w)$ and $w=w^{(1)}\oplus\cdots\oplus w^{(a)}$. Then $|P_{\ell(w)-1}^w|=\sum_{i=1}^a|P_{\ell(w^{(i)})-1}^{w^{(i)}}|$ and $|P_1^w|=\sum_{i=1}^a|P^{w^{(i)}}_1|$. As $\bl(4231)=1$, $w$ contains 4231 if and only if one of $w^{(i)}$ contains 4231. Therefore we can assume without loss of generality that $a=1$. Consequently, $P^{w}_1$ consists of all simple transpositions $s_i$ for $i=1,\ldots,n-1$ so $|P^{w}_1|=n-1$.

Let $u\in \S_{n-1}$ be the permutation obtained from $w$ by restricting to the relative ordering of $w(2),\ldots,w(n)$. Let $b=\bl(u)$ and $u=u^{(1)}\oplus\cdots\oplus u^{(b)}$ with $u^{(i)}$ being a permutation on $J_i\subset\{2,\ldots,n\}$. An example is shown in Figure~\ref{fig:4231}.
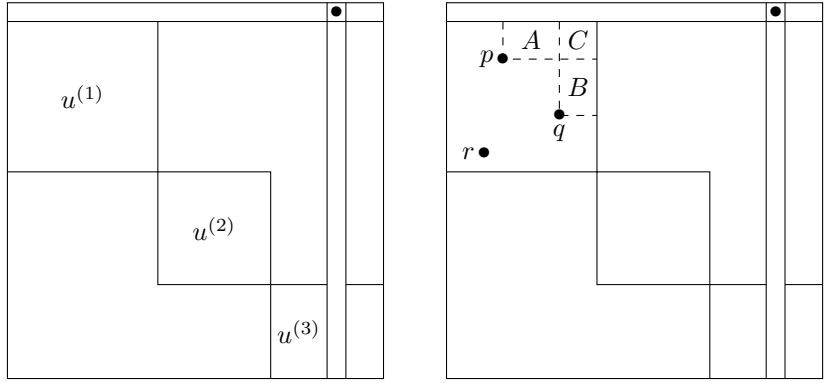
\begin{figure}[h!]
\centering
\begin{tikzpicture}[scale=0.25]
\draw(0,0)--(20,0)--(20,20)--(0,20)--(0,0);
\draw(0,19)--(20,19);
\draw(17,0)--(17,20);
\draw(18,0)--(18,20);
\node at (17.5,19.5) {$\bullet$};
\draw(0,11)--(8,11)--(8,19);
\draw(8,11)--(14,11)--(14,5)--(8,5)--(8,11);
\draw(14,0)--(14,5)--(17,5);
\draw(18,5)--(20,5);
\node at (4,15) {$u^{(1)}$};
\node at (11,8) {$u^{(2)}$};
\node at (15.5,2.5) {$u^{(3)}$};
\end{tikzpicture}
\qquad
\begin{tikzpicture}[scale=0.25]
\draw(0,0)--(20,0)--(20,20)--(0,20)--(0,0);
\draw(0,19)--(20,19);
\draw(17,0)--(17,20);
\draw(18,0)--(18,20);
\node at (17.5,19.5) {$\bullet$};
\draw(0,11)--(8,11)--(8,19);
\draw(8,11)--(14,11)--(14,5)--(8,5)--(8,11);
\draw(14,0)--(14,5)--(17,5);
\draw(18,5)--(20,5);
\node at (2,12) {$\bullet$};
\node[left] at (2,12) {$r$};
\node at (3,17) {$\bullet$};
\node at (3,17)[left] {$p$};
\node at (6,14) {$\bullet$};
\node[below] at (6,14) {$q$};
\draw[dashed](3,19)--(3,17)--(8,17);
\draw[dashed](6,19)--(6,14)--(8,14);
\node at (4.5,18) {$A$};
\node at (7,18) {$C$};
\node at (7,15.5) {$B$};
\end{tikzpicture}
\caption{The decomposition of $w$ with the first entry deleted.  The permutation diagrams in Figures \ref{fig:4231}-\ref{fig:r1after} use matrix coordinates; there is a dot in position $(i,j)$ whenever $w(i)=j$.}
\label{fig:4231}
\end{figure}
Since $\bl(w)=1$, we necessarily have that $w(1)$ is greater than the smallest entry in $J_b$. The minimal inversions of $w$ contain all minimal inversions in $u^{(i)}$'s and minimal inversions of the form $(1,k)$. By the induction hypothesis, the number of minimal inversions in $u^{(i)}$ is at least $|J_i|-1$. And for the minimal inversions in the form of $(1,k)$, we can take $k=w^{-1}(\max J_i-1)$, for $i=1,\ldots,b-1$ (the right most element in each block $u^{(i)}$) and $w^{-1}(w(1)-1)$ (the right most element in the left part of $u^{(b)}$). Together, we obtain $|P^w_{\ell(w)-1}|\geq n-1$ as desired. Moreover, by the induction hypothesis, if any $u^{(i)}$ contains 4231, then the above inequality is strict as well. Thus, we may assume that none of the $u^{(i)}$'s contain 4231.

We now assume that $w$ contains 4231 and all of the 4231's inside $w$ involve the entry $(1,w(1))$. Among all 4231 patterns at indices $1,p,q,r$, choose one where $p$ is minimal and among those, choose one where $w(q)$ is maximal. Since the pattern 231 satisfies $\bl(231)=1$, the entries at $p,q,r$ belong to the same block $J_i$ (see Figure~\ref{fig:4231}). Consider regions $A,B,C$ defined as follows:
\begin{align*}
    A&=\{k\in J_i:k<p,w(p)<w(k)<w(q)\}, \\
    B&=\{k\in J_i:p<k<q,w(q)<w(k)\leq |J_1|+\cdots+|J_i|\}, \\
    C&=\{k\in J_i:k<p,w(q)<w(k)\leq |J_1|+\cdots+|J_i|\}.
\end{align*} 
By minimality of $p$, $A$ must be empty and by maximality of $w(q)$, $B$ must be empty. As $u^{(i)}$ avoids 4231, $C$ must be empty. As a result, $A=B=C=\emptyset$. This means that both $(1,p)$ and $(1,q)$ are minimal inversions of $w$. As $w$ has strictly more than 1 minimal inversions of the form $(1,k)$ for $k\in J_i$, the inequality $|P^w_{\ell(w)-1}|\geq n-1$ is strict, so we are done.
\end{proof}

\begin{lemma}\label{lem:adj-labels}
If $w\in\S_n$ avoids 4231 and has minimal inversions at $(p,q)$ and $(q,r)$, then both $wt_{pq}$ and $wt_{qr}$ cover $wt_{pq}t_{qr}$ and $wt_{qr}t_{pq}$ in the Bruhat interval $[e,w]$.
\end{lemma}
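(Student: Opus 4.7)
The plan is to translate each of the four claimed cover relations into the minimality of a single inversion in $wt_{pq}$ or $wt_{qr}$, using the standard fact that $v$ covers $vt$ in Bruhat order (for a reflection $t$) exactly when $t$ is a minimal inversion of $v$. Applying the conjugation identity $t_{pq}t_{qr}t_{pq}=t_{qr}t_{pq}t_{qr}=t_{pr}$, one rewrites $wt_{qr}t_{pq}=wt_{pq}\cdot t_{pr}$ and $wt_{pq}t_{qr}=wt_{qr}\cdot t_{pr}$. The four assertions of the lemma then reduce to the four minimality claims: $(q,r)$ and $(p,r)$ are minimal inversions of $wt_{pq}$, and $(p,q)$ and $(p,r)$ are minimal inversions of $wt_{qr}$. (Since $wt_{pq},wt_{qr}\le w$, the claimed covers automatically sit inside $[e,w]$.)

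For each of the four claims I would first note that the named pair is genuinely an inversion by reading off the entries at those positions of $wt_{pq}$ or $wt_{qr}$, using that $p<q<r$ and that $(p,q),(q,r)$ are inversions of $w$ force $w(p)>w(q)>w(r)$. Then for minimality I would suppose an index $k$ witnesses non-minimality and case-split on the location of $k$ relative to $\{p,q,r\}$. For the two claims concerning $(p,r)$, the value $k=q$ is discarded immediately, since $(wt_{pq})(q)=w(p)$ and $(wt_{qr})(q)=w(r)$ fall outside the required range. For every remaining $k$ the relevant permutation agrees with $w$ at $k$, and one of two things happens: if $k$ lies ``on the same side'' of $q$ as the inversion being tested, then the required inequalities on $w(k)$ directly contradict the assumed minimality of $(p,q)$ or $(q,r)$ in $w$; if $k$ lies on the ``other side'' of $q$, then the four values $w(p),w(q),w(k),w(r)$, read in order of their positions, form the pattern $4231$ in $w$ (for example, when testing $(p,r)$ in $wt_{pq}$ with $p<k<q$, one has $w(p)>w(q)>w(k)>w(r)$ at positions $p<k<q<r$), contradicting the hypothesis.

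The main obstacle is essentially organizational rather than mathematical: four minimality claims in two permutations, each with a small case split in which the roles of ``contradiction with minimality in $w$'' and ``$4231$ pattern'' swap depending on which side of $q$ the offending $k$ lies. Once the unifying observation is made --- that same-side obstructions violate the minimality of $(p,q)$ or $(q,r)$ while opposite-side obstructions combine with the triple $p,q,r$ to produce a $4231$ --- each of the four verifications becomes a one- or two-line calculation.
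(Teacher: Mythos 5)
Your proposal is correct and follows essentially the same route as the paper: the paper likewise reduces the four covers to minimality of $(q,r)$ and $(p,r)$ in $wt_{pq}$ (and symmetrically in $wt_{qr}$) by showing the rectangle $\{(a,w(a))\mid p<a<r,\ w(r)<w(a)<w(p)\}$ contains only $(q,w(q))$, with the two ``same-side'' strips empty by minimality of $(p,q)$ and $(q,r)$ and the two ``opposite-side'' strips empty by $4231$-avoidance. The only difference is organizational — the paper states the four emptiness facts up front rather than per cover relation.
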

\begin{proof}
We have that $p<q<r$ and $w(p)>w(q)>w(r)$. Since $(p,q)$ and $(q,r)$ are minimal inversions, the sets
\[
\{(a,w(a))\:|\: p<a<q,w(q)<w(a)<w(p)\}
\]
and
\[
\{(a,w(a))\:|\: q<a<r,w(r)<w(a)<w(q)\}
\]
must be empty. Moreover, since $w$ avoids 4231,
\[
\{(a,w(a))\:|\: p<a<q,w(r)<w(a)<w(q)\}
\]
and 
\[
\{(a,w(a))\:|\: q<a<r,w(q)<w(a)<w(p)\}
\]
must be empty as well. As a result, 
\[
\{(a,w(a))\:|\: p<a<r,w(r)<w(a)<w(p)\}=\{(q,w(q))\}.
\]
A useful visualization can be seen in Figure~\ref{fig:adjlabel}.

It is now clear that both $(q,r)$ and $(p,r)$ are minimal inversions of $wt_{pq}$. So $wt_{pq}$ covers $wt_{pq}t_{qr}$ and $wt_{pq}t_{pr}=wt_{qr}t_{pq}$. Similarly, $wt_{qr}$ also covers $wt_{pq}t_{qr}$ and $wt_{qr}t_{pq}$ as desired.
\end{proof}

\begin{lemma}\label{lem:length5patterns}
For $w\in \S_n$ avoiding 4231, if $w$ satisfies (\ref{enum:graphs-isomorphic}) then $w$ avoids 34521, 45321, 54123, 54312 and 3412.
\end{lemma}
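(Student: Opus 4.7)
The strategy is to prove the contrapositive: if $w$ avoids $4231$ but contains one of the five patterns, I will exhibit a graph invariant distinguishing $\Gamma_w$ from $\Gamma^w$. As a preliminary reduction, I use the block decomposition $w = w^{(1)} \oplus \cdots \oplus w^{(b)}$: any pattern occurrence in $w$ lies inside a single block, and an analysis of how cross-block length-$2$ (resp.\ length-$(\ell-2)$) elements contribute to $\Gamma_w$ (resp.\ $\Gamma^w$) shows that $\Gamma_w \cong \Gamma^w$ forces the corresponding isomorphism for every block. Hence I may assume $w$ is itself a single block containing the pattern, so that $|P_1^w| = n - 1$.

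For the $3412$ case the goal is to sharpen the minimal-inversion count in the proof of Lemma~\ref{lem:4231} to show $|P_{\ell(w)-1}^w| > n - 1$; combined with Theorem~\ref{thm:Bjorner-top-heavy} this gives $|P_1^w| + |P_2^w| < |P_{\ell(w)-1}^w| + |P_{\ell(w)-2}^w|$, so $\Gamma_w$ and $\Gamma^w$ have different vertex counts and cannot be isomorphic. The proof of the strict inequality is by induction on $n$: letting $u \in \S_{n-1}$ be the permutation obtained by deleting the first entry of $w$ and $u = u^{(1)} \oplus \cdots \oplus u^{(b)}$, if some $u^{(i)}$ contains $3412$ or $4231$ the claim follows from induction; otherwise every block of $u$ avoids both, and any $3412$ in $w$ necessarily involves the first position at indices $1 < p < q < r$. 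Choosing such an occurrence with $p$ minimal and $w(q)$ maximal, an empty-region analysis of the form appearing in Lemma~\ref{lem:4231} produces one more minimal inversion of the form $(1, k)$ than the $b$ already counted there, giving the needed strict inequality.

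For the four length-$5$ patterns I may additionally assume $w$ avoids $3412$. The plan is to exhibit an element $v \in P_{\ell(w)-1}^w$ with $\ddeg_w(v) > \max_i \udeg_w(s_i)$; vertex counts then force any candidate isomorphism $\Gamma_w \cong \Gamma^w$ to pair $P_1^w$ with $P_{\ell(w)-1}^w$, and the degree sequences on that side cannot agree. The prototype is $w = 34521$: taking $v = w t_{4,5} = 34512$ makes all six inversions of $v$ minimal, so $\ddeg_w(v) = 6$, while a direct check gives $\max_i \udeg_w(s_i) = 5$. For general $w$ containing $34521$ at a minimal occurrence $i_1 < \cdots < i_5$, I would take $v = w t_{i_4, i_5}$; avoidance of $4231$ and $3412$, together with minimality of the occurrence, restricts which entries may appear in the regions between the $i_j$'s, so the six inversions of $v$ corresponding to the pattern remain minimal, and one verifies that no $s_k$ can reach up-degree $6$. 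Analogous constructions will handle $45321$, $54123$, and $54312$, each time using Lemma~\ref{lem:adj-labels} to control the local cover structure around the overloaded $v$.

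The main obstacle will be the case analysis in the previous paragraph: while the local calculation in each five-letter prototype is immediate, lifting it to arbitrary $w$ requires pattern-by-pattern empty-region arguments (in the style of the proof of Lemma~\ref{lem:4231}) both to confirm minimality of the selected inversions at $v$ and to bound each $\udeg_w(s_k)$ from above. Controlling both sides uniformly using only $4231$- and $3412$-avoidance is the delicate part, and is where the four forbidden length-$5$ patterns earn their place in the list.
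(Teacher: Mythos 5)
Your reduction to one block and your treatment of the four length-$5$ patterns are fine in outline, but the $3412$ step contains a genuine gap, and it is load-bearing since you invoke $3412$-avoidance when handling the length-$5$ patterns. The claim that a $4231$-avoiding $w$ containing $3412$ must satisfy $|P_{\ell(w)-1}^w|>|P_1^w|$ is false. Take $w=45312\in\S_5$: it avoids $4231$, contains $3412$ at indices $1,2,4,5$, and has $\bl(w)=1$, so $|P_1^w|=4$; but its minimal inversions are exactly $(1,3),(2,3),(3,4),(3,5)$, so $|P_{\ell(w)-1}^w|=4$ as well. Concretely, the entry $w(3)=3$ sits inside the middle region (the set the paper calls $C_2$ in Figure~\ref{fig:3412}) and prevents $(1,4)$ and $(1,5)$ from being minimal, so the extra minimal inversion of the form $(1,k)$ that your empty-region analysis is supposed to produce simply does not exist. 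This is precisely why the paper's proof of Lemma~\ref{lem:length5patterns} splits the $3412$ case according to $|C_2|\in\{0,1,2\}$ and uses the counting argument of Lemma~\ref{lem:4231} only when $C_2=\emptyset$; when $C_2\neq\emptyset$ it switches to an entirely different mechanism, namely the ``adjacent labels'' fact derived from Lemma~\ref{lem:adj-labels} (an isomorphism $\Gamma_w\cong\Gamma^w$ forces minimal inversions sharing an index to receive labels differing by $1$, and the configuration around a $C_2$-entry forces one label to be adjacent to too many others). You need some version of that argument, or another invariant, to dispose of permutations like $45312$.

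Two further remarks. First, the paper handles the length-$5$ patterns \emph{before} $3412$, using only $4231$-avoidance via the label-adjacency fact; your degree-comparison route (finding $v\in P_{\ell(w)-1}^w$ with $\ddeg_w(v)\geq n+1$ while $\udeg_w(s_i)\leq n$ for all $i$) is genuinely different from the paper's argument for this lemma, and is in fact essentially how the paper later proves Theorem~\ref{thm:our-top-heavy}. It is workable, but note that the lower bound $\ddeg_w(v)\geq n+1$ requires that every pair of minimal inversions of $w$ share a lower cover, and the disjoint-rectangles case of that claim uses $3412$-avoidance -- so your ordering of the cases forces you to fix the $3412$ step first, whereas the paper's ordering lets it prove the length-$5$ cases with no such dependency. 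Second, the reduction asserting that any isomorphism $\Gamma_w\cong\Gamma^w$ must pair $P_1^w$ with $P_{\ell(w)-1}^w$ deserves a sentence (it is also glossed over in the paper), but this is minor compared with the $3412$ issue.
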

\begin{proof}
All four patterns mentioned in this lemma have one block, so we can again without loss of generality assume that $\bl(w)=1$ and therefore that $P_{1}^w=\{s_1,\ldots,s_{n-1}\}$. Assume that $w$ avoids 4231 and it satisfies condition (\ref{enum:graphs-isomorphic}).  Thus there exists some graph isomorphism $\Gamma^w \cong \Gamma_w$ identifying $P_{\ell(w)-1}^w$, which is in bijection with minimal inversions, and $P_1^w$, which is the set of simple transpositions. We will label all minimal inversions by $\{1,2,\ldots,n-1\}$ corresponding to their associated simple transpositions.

The following fact is going to be very useful. Assume $w$ satisfies (\ref{enum:graphs-isomorphic}) and $w$ avoids 4231. Then if $w$ has minimal inversions at $(p,q)$ and $(q,r)$ with labels $i$ and $j$ respectively, then $i$ and $j$ must differ by one (see Figure~\ref{fig:adjlabel}).
\begin{figure}[h!]
\centering
\begin{tikzpicture}[scale=0.25]
\draw(0,0)--(18,0)--(18,12)--(0,12)--(0,0);
\draw(1,0)--(1,12);
\draw(17,0)--(17,12);
\draw(0,11)--(18,11);
\draw(0,1)--(18,1);
\draw(0,5)--(18,5);
\draw(0,6)--(18,6);
\draw(10,0)--(10,12);
\draw(11,0)--(11,12);
\node at (0.5,0.5) {$\bullet$};
\node at (10.5,5.5) {$\bullet$};
\node at (17.5,11.5) {$\bullet$};
\draw(0.5,0.5)--(10.5,5.5)--(17.5,11.5);
\node at (9,2) {$C$};
\node at (12,2) {$D$};
\node at (9,10) {$A$};
\node at (12,10) {$B$};
\node[left] at (5.5,3) {$j$};
\node[right] at (14,8.5) {$i$};
\node[right] at (17.5,11.5) {$p$};
\node[left] at (0.5,0.5) {$r$};
\node[left] at (10.4,5.5) {$q$};
\end{tikzpicture}
\caption{Adjacent labels}
\label{fig:adjlabel}
\end{figure}
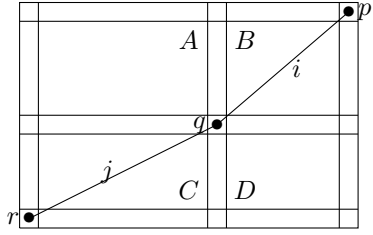

To see this fact, we use Lemma~\ref{lem:adj-labels}. The graph isomorphism $\Gamma_w \cong \Gamma^w$ implies that there exists two elements in $P_{2}^{w}$ that cover both $s_i$ and $s_j$ in the strong Bruhat order. As a result, $|i-j|=1$ since otherwise, there exists only one element $s_is_j=s_js_i\in P_{2}^w$ that covers both $s_i$ and $s_j$.

We first deal with the patterns 34521, 45321, 54123, 54312 of size five. If $w$ contains 45321, take a minimal pattern at indices $a_1<a_2<a_3<a_4<a_5$ and consider the 16 regions indicated in Figure~\ref{fig:45321}. Since $w$ avoids 4231, we know that $A_{11},A_{12},A_{21},A_{22},A_{31},A_{33},A_{34},A_{42},A_{43},A_{44}$ are all empty. If $A_{41}$ is non empty and contains some $(a',w(a'))$, then $w$ contains a pattern 45321 at indices $a_1<a_2<a_3<a_4<a'$, contradicting the minimality of $a_1<a_2<a_3<a_4<a_5$. Similarly, the rest of the regions $A_{13},A_{14},A_{23},A_{24},A_{32}$ are all empty by the minimality. 
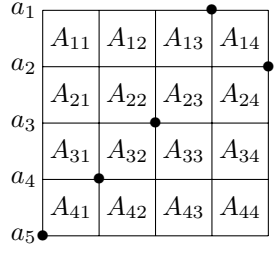
\begin{figure}[h!]
\centering
\begin{tikzpicture}[scale=0.25]
\draw[step=3.0,black] (0,0) grid (12,12);
\node at (0,0) {$\bullet$};
\node at (3,3) {$\bullet$};
\node at (6,6) {$\bullet$};
\node at (9,12) {$\bullet$};
\node at (12,9) {$\bullet$};
\node at (-1,0) {$a_5$};
\node at (-1,3) {$a_4$};
\node at (-1,6) {$a_3$};
\node at (-1,9) {$a_2$};
\node at (-1,12) {$a_1$};
\node at (1.5,1.5) {$A_{41}$};
\node at (1.5,4.5) {$A_{31}$};
\node at (1.5,7.5) {$A_{21}$};
\node at (1.5,10.5) {$A_{11}$};
\node at (4.5,1.5) {$A_{42}$};
\node at (4.5,4.5) {$A_{32}$};
\node at (4.5,7.5) {$A_{22}$};
\node at (4.5,10.5) {$A_{12}$};
\node at (7.5,1.5) {$A_{43}$};
\node at (7.5,4.5) {$A_{33}$};
\node at (7.5,7.5) {$A_{23}$};
\node at (7.5,10.5) {$A_{13}$};
\node at (10.5,1.5) {$A_{44}$};
\node at (10.5,4.5) {$A_{34}$};
\node at (10.5,7.5) {$A_{24}$};
\node at (10.5,10.5) {$A_{14}$};
\end{tikzpicture}
\caption{A minimal 45321.}
\label{fig:45321}
\end{figure}
As a result, we now have minimal inversions at $(a_1,a_3)$, $(a_2,a_3)$, $(a_3,a_4)$ and $(a_4,a_5)$ and let their labels be $i_1,i_2,i_3,i_4$ respectively. By the fact regarding adjacent labels above, we know that $i_3$ is simultaneously adjacent to $i_1$, $i_2$ and $i_4$. This yields a contradiction. We will have the same contradiction if $w$ contains $54312$, the inverse of $45321$.

So we assume further that $w$ avoids 54312 and 45321. If $w$ contains 34521, we similarly take a minimal 34521 at indices $a_1<\cdots<a_5$, and consider the regions shown in Figure~\ref{fig:34521} (left) as before.
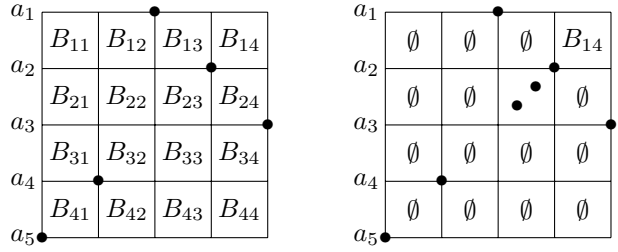
\begin{figure}[h!]
\centering
\begin{tikzpicture}[scale=0.25]
\draw[step=3.0,black] (0,0) grid (12,12);
\node at (0,0) {$\bullet$};
\node at (3,3) {$\bullet$};
\node at (6,12) {$\bullet$};
\node at (9,9) {$\bullet$};
\node at (12,6) {$\bullet$};
\node at (-1,0) {$a_5$};
\node at (-1,3) {$a_4$};
\node at (-1,6) {$a_3$};
\node at (-1,9) {$a_2$};
\node at (-1,12) {$a_1$};
\node at (1.5,1.5) {$B_{41}$};
\node at (1.5,4.5) {$B_{31}$};
\node at (1.5,7.5) {$B_{21}$};
\node at (1.5,10.5) {$B_{11}$};
\node at (4.5,1.5) {$B_{42}$};
\node at (4.5,4.5) {$B_{32}$};
\node at (4.5,7.5) {$B_{22}$};
\node at (4.5,10.5) {$B_{12}$};
\node at (7.5,1.5) {$B_{43}$};
\node at (7.5,4.5) {$B_{33}$};
\node at (7.5,7.5) {$B_{23}$};
\node at (7.5,10.5) {$B_{13}$};
\node at (10.5,1.5) {$B_{44}$};
\node at (10.5,4.5) {$B_{34}$};
\node at (10.5,7.5) {$B_{24}$};
\node at (10.5,10.5) {$B_{14}$};
\end{tikzpicture}
\qquad
\begin{tikzpicture}[scale=0.25]
\draw[step=3.0,black] (0,0) grid (12,12);
\node at (0,0) {$\bullet$};
\node at (3,3) {$\bullet$};
\node at (6,12) {$\bullet$};
\node at (9,9) {$\bullet$};
\node at (12,6) {$\bullet$};
\node at (-1,0) {$a_5$};
\node at (-1,3) {$a_4$};
\node at (-1,6) {$a_3$};
\node at (-1,9) {$a_2$};
\node at (-1,12) {$a_1$};
\node at (1.5,1.5) {$\emptyset$};
\node at (1.5,4.5) {$\emptyset$};
\node at (1.5,7.5) {$\emptyset$};
\node at (1.5,10.5) {$\emptyset$};
\node at (4.5,1.5) {$\emptyset$};
\node at (4.5,4.5) {$\emptyset$};
\node at (4.5,7.5){$\emptyset$};
\node at (4.5,10.5) {$\emptyset$};
\node at (7.5,1.5) {$\emptyset$};
\node at (7.5,4.5) {$\emptyset$};
\node at (7,7) {$\bullet$};
\node at (8,8) {$\bullet$};
\node at (7.5,10.5) {$\emptyset$};
\node at (10.5,1.5) {$\emptyset$};
\node at (10.5,4.5) {$\emptyset$};
\node at (10.5,7.5) {$\emptyset$};
\node at (10.5,10.5) {$B_{14}$};
\end{tikzpicture}
\caption{A minimal 34521.}
\label{fig:34521}
\end{figure}
The cases are slightly more complicated here. Since $w$ avoids 4231, $B_{11},B_{21},B_{31},B_{42},B_{43},B_{44}$ are empty. Since $w$ avoids $45321$, $B_{22},B_{33}$ are empty. Since $a_1<\cdots<a_5$ is minimal, $B_{41},B_{32},B_{12},B_{13},B_{24},B_{34}$ are empty. Thus, among the regions shown in Figure~\ref{fig:34521}, all regions but $B_{23}$ and $B_{14}$ must be empty. Since $w$ avoids 4231, entries in region $B_{23}$ must be decreasing and let them be $(c_1,w(c_1)),\ldots,(c_k,w(c_k))$, $k\geq0$ where $c_1<\cdots<c_k$ and $w(c_1)>\cdots>w(c_k)$, shown in Figure~\ref{fig:3412} (right). By the fact above regarding adjacent labels, we can conclude that the labels of the minimal inversion $(a_4,a_5)$ must be simultaneously adjacent to the labels of $(a_1,a_4)$, $(c_k,a_4)$ and $(a_3,a_4)$ with the convention that $c_0=a_2$. This yields a contradiction. Elements inside region $B_{14}$ will not affect our argument. The case where $w$ contains 54123 is the same as 54123 is the inverse of 34521.

Finally, we can assume that $w$ avoids 4231, 34521, 45321, 54123 and 54312. Suppose that $w$ contains 3412 and let a minimal 3412 be at indices $a_1<a_2<a_3<a_4$. By minimality, all regions except $C_1,C_2,C_3$ must be empty, as shown in Figure~\ref{fig:3412}.
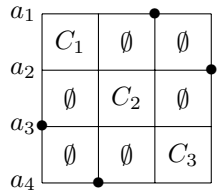
\begin{figure}[h!]
\centering
\begin{tikzpicture}[scale=0.25]
\draw[step=3.0,black] (0,0) grid (9,9);
\node at (0,3) {$\bullet$};
\node at (3,0) {$\bullet$};
\node at (6,9) {$\bullet$};
\node at (9,6) {$\bullet$};
\node at (-1,0) {$a_4$};
\node at (-1,3) {$a_3$};
\node at (-1,6) {$a_2$};
\node at (-1,9) {$a_1$};
\node at (1.5,7.5) {$C_1$};
\node at (4.5,4.5) {$C_2$};
\node at (7.5,1.5) {$C_3$};
\node at (1.5,4.5) {$\emptyset$};
\node at (1.5,1.5) {$\emptyset$};
\node at (4.5,1.5) {$\emptyset$};
\node at (4.5,7.5) {$\emptyset$};
\node at (7.5,4.5) {$\emptyset$};
\node at (7.5,7.5) {$\emptyset$};
\end{tikzpicture}
\caption{A minimal 3412}
\label{fig:3412}
\end{figure}
Since $w$ avoids 4231, elements in $C_2$ must be decreasing. Then as $w$ avoids 45321 (or 54312), $|C_2|\leq2$. We divide into cases depending on the value of $|C_2|$.

If $|C_2|=2$, let it be $(c_1,w(c_1))$ and $(c_2,w(c_2))$ with $c_1<c_2$ and $w(c_1)>w(c_2)$. As $w$ avoids 4231, $C_1$ and $C_3$ must now be empty. The label of the minimal inversion $(c_1,c_2)$ must now be simultaneously adjacent to $(a_1,c_1)$, $(a_2,c_1)$, $(c_2,a_3)$ and $(c_2,a_4)$ and this is clearly impossible. If $|C_1|=1$, let it be $(c_1,w(c_1))$. Similarly $C_1$ and $C_3$ must be empty. Let the labels of the minimal inversions $(a_1,c_1)$, $(a_2,c_1)$, $(c_1,a_3)$ and $(c_1,a_4)$ be $i_1$, $i_2$, $i_3$ and $i_4$ respectively. Then $i_1$ is adjacent to $i_3$, $i_1$ is adjacent to $i_4$, $i_2$ is adjacent to $i_3$ and $i_2$ is adjacent to $i_4$. This is again impossible.

The last remaining case is that $C_2$ is empty so $C_1$ and $C_3$ may not be empty. As $w$ avoids 4231, elements in $C_1$ and $C_3$ are decreasing. Now we use the strategy in the proof of Lemma~\ref{lem:4231} to show that $| P_{\ell(w)-1}^w|>|P_{1}^w|$, contradicting the fact that $w$ was assumed to satisfy (\ref{enum:graphs-isomorphic}). Without of loss generality assume that $\bl(w)=1$ so that $|P_1^w|=n-1$. Let $u$ be obtained from $w$ by removing index $1$ and let $b=\bl(u)$ with blocks $J_1,\ldots,J_b$. Recall that $|P_{\ell(w)-1}|$ is at least the number of minimal inversions inside each block $J_i$ plus the number of minimal inversions involving index $1$ while the number of minimal inversions inside $J_i$ is at least $|J_i|-1$ by induction and the number of minimal inversions involving $1$ and block $J_i$ is at least 1. They sum up to $n-1$. Now if $a_1>1$, since $\bl(3412)=1$, indices $a_1,\ldots,a_4$ together with $C_1$ and $C_3$ must lie in the same block $J_i$ in $u$. We can then use induction to see that the number of minimal inversions inside $J_i$ is strictly larger than $|J_i|-1$ and as a result, $|P_{\ell(w)-1}|>n-1$. The critical case is that $a_1=1$. Let $C_1$ consists of $(c_1,w(c_1)),\ldots,(c_k,w(c_k))$ with $c_1<\cdots<c_k$ and $w(c_1)>\cdots>w(c_k)$, $k\geq0$. Again, indices $a_2,a_3,a_4$ together with $C_1$ and $C_3$ all lie in the same block $J_i$ of $u$. As a result, minimal inversions involving 1 and $J_i$ contain $(1,c_k)$, where $c_0=a_3$ if $k=0$, and $(1,a_4)$, contributing at least 2 to the sum. Therefore, we conclude $| P^w_{\ell(w)-1}|>|P^w_1|$ as well.
\end{proof}

Direction (\ref{enum:graphs-isomorphic})$\Rightarrow$(\ref{enum:polished-patterns}) follows from Lemma~\ref{lem:4231} and Lemma~\ref{lem:length5patterns}.

\subsection{Proof of direction (\ref{enum:polished-patterns})$\Rightarrow$(\ref{enum:w-is-polished})}\label{sub:2-3}
Throughout this section, assume that $w\in\mathfrak{S}_n$ is a permutation that avoids 3412, 4231, 34521, 45321, 54123 and 54312. We are going to use the permutation matrix of $w$, as in Section~\ref{sub:1-2}, to give a decomposition of $w$.

We first divide all such permutations $w$ into different ``types". Consider the region $C=\{(a,w(a))\:|\: 1\leq a\leq w^{-1}(1),1\leq w(a)\leq w(1)\}$ which contains $(1,w(1))$ and $(w^{-1}(1),1)$ and define $t=t(w)=|C|-1$ (see Figure~\ref{fig:selfdualStructureCRL}). If $w(1)=1$, $C$ contains only $(1,1)$ and we say that such $w$ is of type n, where n stands for ``none". We also observe that entries in $C$ are decreasing, meaning that if $(a_1,w(a_1)),(a_2,w(a_2))\in C$ with $a_1<a_2$, then $w(a_1)>w(a_2)$. This is because otherwise, $w$ would contain a pattern 4231 at indices $1,a_1,a_2,w^{-1}$. Assume that $C$ contains $(c_0,w(c_0)),\ldots,(c_t,w(c_t))$ where $1=c_0<\cdots<c_t$ and $w(c_0)>\cdots>w(c_t)=1$.

Then let
\[
R=\{(a,w(a))\:|\: 1<a<w^{-1}(1),w(a)>w(1)\}
\]
and 
\[
L=\{(a,w(a))\:|\: a>w^{-1}(1),1<w(a)<w(1)\}.
\] 
Since $w$ avoids 3412, at least one of $R$ and $L$ must be empty. Otherwise, say $(a_1,w(a_1))\in R$ and $(a_2,w(a_2))\in L$, then automatically $w(1)\neq1$ and $w$ contains a pattern 3412 at indices $1,a_1,w^{-1}(1),a_2$. It is certainly possible that $L=R=\emptyset$, in which case we say that $w$ is of type n as above. If $L\neq\emptyset$, we say that $w$ is of type l, where l stands for either ``left" or ``lower" and if $R\neq\emptyset$, we say that $w$ is of type r, where r stands for ``right". If $w$ is of type l, then $w^{-1}$ is of type r, so these two cases are completely analogous.

\begin{figure}[h!]
\centering
\begin{tikzpicture}[scale=0.4]
\draw(0,0)--(8,0);
\draw(0,0)--(0,-8);
\draw(0,-4)--(8,-4);
\draw(4,0)--(4,-8);
\node at (4,0) {$\bullet$};
\node at (0,-4) {$\bullet$};
\node at (2,-6) {$L$};
\node at (6,-2) {$R$};
\node at (1,-1) {$C$};
\node at (2.4,-2.4) {$\bullet$};
\node at (3.2,-1.2) {$\bullet$};
\node at (1.2,-3.2) {$\bullet$};
\end{tikzpicture}
\qquad
\begin{tikzpicture}[scale=0.4]
\draw(0,0)--(12,0);
\draw(0,0)--(0,-8);
\draw(0,-4)--(12,-4);
\draw(4,0)--(4,-8);
\node at (4,0) {$\bullet$};
\node at (2.8,-0.5) {$\bullet$};
\node at (1.6,-1) {$\bullet$};
\node at (0.8,-2.5) {$\bullet$};
\node at (0,-4) {$\bullet$};
\node at (2,-6) {$\emptyset$};
\node at (0,-4)[left] {$c_t$};
\node at (0,-2.5)[left] {$c_{t-1}$};
\node at (0,-1)[left] {$c_{t-2}$};
\node at (0,0)[left] {$c_0$};
\draw[dashed] (1.6,-1)--(12,-1);
\draw[dashed] (0.8,-2.5)--(12,-2.5);
\node at (11,-0.5) {$\emptyset$};
\node at (11,-1.75) {$R_1$};
\node at (11,-3.25) {$R_0$};
\node at (5,-2.2) {$\bullet$};
\node at (6,-1.9) {$\bullet$};
\node at (7,-1.6) {$\bullet$};
\node at (8,-1.3) {$\bullet$};
\end{tikzpicture}
\caption{Structure of smooth permutations (left) and structure of permutations avoiding 3412, 4231, 34521, 45321, 54123 and 54312 (right).}
\label{fig:selfdualStructureCRL}
\end{figure}
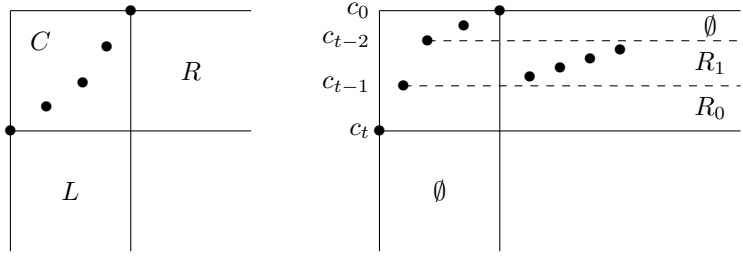

So far we have only used the condition that $w$ is smooth, meaning that $w$ avoids 4231 and 3412. The above analysis has also appeared in previous works including \cite{Gasharov} and \cite{Oh-Postnikov-Yoo}. 

Now assume that $w$ is of type r so that $L=\emptyset$ and $R\neq\emptyset$. We can further divide $R$ as a disjoint union $R_0\sqcup R_1\sqcup R_2$ (see Figure~\ref{fig:selfdualStructureCRL}) where 
\begin{align*}
R_0&=\{(a,w(a))\:|\: c_{t-1}<a<c_t\},\\
R_1&=\{(a,w(a))\:|\: c_{t-2}<a<c_{t-1}\}, \text{ and}\\ R_2&=\{(a,w(a))\:|\: 1<a<c_{t-2}\}.
\end{align*}
As $w$ is of type r, $t\geq1$. If $t=1$, $R_1=R_2=\emptyset$ and if $t=2$, $R_2=\emptyset$ automatically. Regardless, we see that in fact, if $R_2\neq\emptyset$ and contains $(a,w(a))$, then $w$ would contain a pattern 45321 at indices $1,a,c_{t-2},c_{t-1},c_t$. Thus, $R_2=\emptyset$. Moreover, we see that entries in $R_1$ must be decreasing: otherwise if $(a,w(a)),(a',(w(a'))\in R_1$ with $a<a'$ and $w(a)<w(a')$, then $w$ would contain a pattern 34521 at indices $1,a,a',c_{t-1},c_t$, a contradiction. If $R_1\neq\emptyset$, we further say that $w$ is of type r$_1$ and if $R_1=\emptyset$, then $R_0\neq\emptyset$ and we say that $w$ is of type r$_0$. Similarly we can define type l$_1$ and type l$_0$. Equivalently, we can also say that $w$ is of type l$_i$ if $w^{-1}$ is of type r$_i$, $i\in\{0,1\}$.

The following lemma allows us to inductively decompose $w$. As a piece of notation, if $w\in\mathfrak{S}_n$ satisfies $w(1)=1,\ldots,w(m)=m$ for some $m$, then $w$ lies in the parabolic subgroup of $\mathfrak{S}_n$ generated by $J=\{s_{m+1},\ldots,s_{n-1}\}$. In this case, we will naturally consider $w\in(\mathfrak{S}_n)_J$ as a permutation in $\mathfrak{S}_{n-m}$.
\begin{lemma}\label{lem:patternToDecomp}
Let $w\in\mathfrak{S}_n$ be a permutation that avoids the six patterns in (\ref{enum:polished-patterns}). Let $J=\{s_1,\ldots,s_t\}\subset S=\{s_1,\ldots,s_{n-1}\}$ be a connected subset of of the Dynkin diagram of $\mathfrak{S}_n$, where $t=t(w)$ as above.
\begin{itemize}
\item If $w$ is of type n, $w\cdot w_0(J)=w_0(J)\cdot w\in (\mathfrak{S}_n)_{(S\setminus J)\setminus\{s_{t+1}\}}$ is a permutation of size $n-t-1$ that avoids the six patterns in (\ref{enum:polished-patterns}).
\item If $w$ is of type r$_0$, $w_0(J)\cdot w\in(\mathfrak{S}_n)_{S\setminus J}$ is a permutation of size $n-t$ that avoids the six patterns in (\ref{enum:polished-patterns}).
\item If $w$ is of type r$_1$, $w'=s_t\cdot w_0(J)\cdot w\in (\mathfrak{S}_n)_{(S\setminus J)\cup\{s_t\}}$ is a permutation of size $n-t+1$ that avoids the six patterns in (\ref{enum:polished-patterns}). Considered as a permutation in $\S_{n-t+1}$, $t(w')=|R_1|+1$ and $w'$ is not of type r$_1$. Moreover, if $|R_1|=1$, $w'$ is not of type l$_1$ either.
\end{itemize}
\end{lemma}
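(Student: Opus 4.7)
The plan is to handle each of the three type cases by first using the structural analysis earlier in this subsection to pin down the one-line notation of $w$ on an initial block of positions, then computing the image under left multiplication by $w_0(J)$ (and, in type r$_1$, additionally by $s_t$), and finally verifying parabolic-subgroup membership, pattern avoidance, and in type r$_1$ the equality $t(w') = |R_1| + 1$ together with the type claims, directly from the explicit descriptions.

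The structural step proceeds as follows. Using $R_2 = \emptyset$, together with $R_1 = \emptyset$ in the two non-r$_1$ cases, every position strictly between $c_0$ and $c_{t-1}$ (or between $c_0$ and $c_{t-2}$, in type r$_1$) of $w$ that is not in $C$ would have to lie in $R_1 \cup R_2$, forcing such positions to be in $C$ after all. This makes the initial part of $C$ occupy consecutive positions $1, 2, 3, \ldots$. A counting argument then fixes the values: any value in $\{2, \ldots, w(1) - 1\}$ absent from $C$ would sit at a position greater than $c_t$ and hence lie in $L = \emptyset$, so all of $\{1, \ldots, w(1)\}$ must appear in $C$, giving $w(1) = t + 1$. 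In type n this yields $w = w_0(J) \oplus \sigma$ with $\sigma$ supported on $\{t + 2, \ldots, n\}$; in type r$_0$, $w$ begins $(t+1)(t) \cdots 2$ and has $w(c_t) = 1$; and in type r$_1$, $w$ begins $(t+1) t \cdots 3$, followed by the decreasing values of $R_1$, then $w(c_{t-1}) = 2$, then $R_0$, then $w(c_t) = 1$.

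From these explicit forms, computing $w'$ is routine: $w_0(J)$ reverses the values $\{1, \ldots, t+1\}$ via $k \mapsto t + 2 - k$ and $s_t$ swaps $t$ with $t + 1$, leaving larger values fixed. In type n, $w_0(J)$ commutes with $\sigma$ since their supports are disjoint, giving the first claim $w \cdot w_0(J) = w_0(J) \cdot w$. Inspection then shows $w'$ fixes the prescribed initial segment of positions in each case, establishing the parabolic-subgroup claim. For pattern avoidance, I would argue that $w'$ restricted to the positions outside this fixed segment realizes the same pattern as the corresponding restriction of $w$: in type n this is literal, while in types r$_0$ and r$_1$ the values at $c_t$ (and, in r$_1$, at $c_{t-1}$) change but remain the minimum (and next-smallest) of the relevant value sets, so the relative order is preserved. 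Thus $w'$ is a pattern of $w$ and therefore avoids the six forbidden patterns.

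The numerical claims in type r$_1$ are where the main obstacle lies. To get $t(w') = |R_1| + 1$, one first uses $4231$-avoidance to show every entry of $R_0$ has value strictly greater than $w(t) = \max R_1$: an $(a, w(a)) \in R_0$ with $w(a) < w(t)$ would give, at positions $t < c_{t-1} < a < c_t$ with values $w(t), 2, w(a), 1$, a forbidden $4231$. Given this, reading off $C$ from the explicit one-line notation of $w'$ produces exactly $|R_1| + 2$ decreasing entries (the image of the $R_1$ positions together with those of $c_{t-1}$ and $c_t$), proving $t(w') = |R_1| + 1$. That $w'$ is not of type r$_1$ then follows because the positions playing the role of $c_{t'-1}$ and $c_{t'-2}$ in $w'$ are consecutive, forcing the corresponding $R_1$ set to be empty; when $|R_1| = 1$, the symmetric computation for $(w')^{-1}$, using $t((w')^{-1}) = t(w')$, shows $w'$ is not of type l$_1$ either.
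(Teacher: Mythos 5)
Your proposal is correct and follows essentially the same route as the paper's proof: explicit one-line forms for each type, direct computation of $w'$, pattern avoidance via the order-isomorphism of $w$ and $w'$ on the non-fixed positions, and direct verification of $t(w')$ and the type claims from the resulting diagram. Your explicit $4231$ argument showing every $R_0$ value exceeds $\max R_1$ nicely justifies a step the paper only asserts as ``evident.''
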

\begin{proof}
First notice the simple fact that if $u\in\mathfrak{S}_n$ contains one of the patterns in (\ref{enum:polished-patterns}) and $\{u(1),\ldots,u(m)\}=\{1,\ldots,m\}$, then such a pattern appears either within the first $m$ indices or within the last $n-m$ indices.

If $w$ is of type n, then $w(1)=t+1,w(2)=t,\ldots,w(t+1)=1$. After multiplying by $w_0(J)$ on either side, we obtain $w'=w_0(J)w=ww_0(J)$ satisfying $w'(i)=i$ for $i\leq t+1$ and $w'(i)=w(i)$ for $i>t+1$. Clearly $w'$ avoids the patterns of interest, as $w$ avoids them.

If $w$ is of type r$_0$, then $w(1)=t+1,w(2)=t,\ldots,w(t)=2$ and $w(c_t)=1$ where $c_t>t+1$. Let $w'=w_0(J)\cdot w$. We see that $w'(1)=1,\ldots,w'(t)=t$, $w'(c_t)=t+1$ and $w'(i)=w(i)$ if $i\notin\{c_0,\ldots,c_t\}$. So we do have $w'\in(\mathfrak{S}_n)_{S\setminus J}$. By our argument above, if $w'$ contains a pattern $\pi$ mentioned in (\ref{enum:polished-patterns}), then none of the indices $1,\ldots,t$ can be involved, and since $w$ avoids $\pi$, the index $c_t$ must be involved. Say $w'$ contains pattern $\pi$ at indices $a_1<\cdots<a_k$ with $a_i=c_t$. As $a_1>t$, the relative ordering of the entries does not change after we multiply $w$ by $w_0(J)$ on the left to obtain $w'$, so $w$ must also contain pattern $\pi$ at the same indices. This yields a contradiction so $w'$ must avoid all six patterns of interest.

The critical case is that $w$ is of type r$_1$. Let $w'=s_t\cdot w_0(J)\cdot w$ (see Figure~\ref{fig:r1after}). We observe that $w'(i)=i$ for $i\leq t-1$, $w'(c_{t-1})=w(1)$, $w'(c_t)=w(2)$ while $w'$ and $w$ agree on other indices. Thus, $w'$ lies in the parabolic subgroup of $\mathfrak{S}_n$ generated by $s_t,\ldots,s_{n-1}$. We next argue that $w'$ avoids the six patterns of interest. Assume for the sake of contradiction that $w'$ contains one of the patterns in (\ref{enum:polished-patterns}) at indices $a_1<\cdots<a_k$. First, $a_1>t-1$ by the argument above. But when restricted to the last $n-t+1$ indices, $w$ and $w'$ agree by construction, so $w$ must also contain one of the patterns at the same set of indices. This yields a contradiction.

Let $R_1=\{(t,w(t)),\ldots,(t+m-1,w(t+m-1))\}$ where $|R_1|=m$ with $w(t)>\cdots>w(t+m-1)$. Then $c_{t-1}=t+m$. 
Let $w''\in\mathfrak{S}_{n-t+1}$ be the permutation of $w'$ restricted to the last $n-t+1$ indices. In other words, $w''(i)=w'(i+t-1)$. Consider the possible types for $w''$. It is more convenient to stay with the figure of $w'$. If $w''$ were of type r$_1$, then the set 
\[
\{(a,w'(a))\:|\: t<a<t+m,w'(a)>w'(t)\}
\]
cannot be empty, contradicting the fact that entries in $R_1$ are decreasing. Moreover, if $m=|R_1|=1$, $w''$ cannot be of type l$_1$ because otherwise
\[
\{(a,w'(a))\:|\: a>c_t,w'(c_t)<w'(a)<w'(c_{t-1})\}
\]
cannot be empty, contradicting $w$ being type r. It is also evident that $t(w'')=m+1$, as there are $m+2$ entries weakly inside the rectangle bounded by $(t,w'(t))$ and $(c_t,w'(c_t))$.
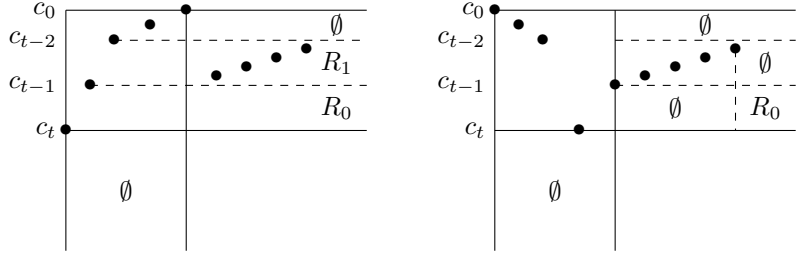
\begin{figure}[h!]
\centering
\begin{tikzpicture}[scale=0.4]
\draw(0,0)--(10,0);
\draw(0,0)--(0,-8);
\draw(0,-4)--(10,-4);
\draw(4,0)--(4,-8);
\node at (4,0) {$\bullet$};
\node at (2.8,-0.5) {$\bullet$};
\node at (1.6,-1) {$\bullet$};
\node at (0.8,-2.5) {$\bullet$};
\node at (0,-4) {$\bullet$};
\node at (2,-6) {$\emptyset$};
\node at (0,-4)[left] {$c_t$};
\node at (0,-2.5)[left] {$c_{t-1}$};
\node at (0,-1)[left] {$c_{t-2}$};
\node at (0,0)[left] {$c_0$};
\draw[dashed] (1.6,-1)--(10,-1);
\draw[dashed] (0.8,-2.5)--(10,-2.5);
\node at (9,-0.5) {$\emptyset$};
\node at (9,-1.75) {$R_1$};
\node at (9,-3.25) {$R_0$};
\node at (5,-2.2) {$\bullet$};
\node at (6,-1.9) {$\bullet$};
\node at (7,-1.6) {$\bullet$};
\node at (8,-1.3) {$\bullet$};
\end{tikzpicture}
\qquad
\begin{tikzpicture}[scale=0.4]
\draw(0,0)--(10,0);
\draw(0,0)--(0,-8);
\draw(0,-4)--(10,-4);
\draw(4,0)--(4,-8);
\node at (0,0) {$\bullet$};
\node at (0.8,-0.5) {$\bullet$};
\node at (1.6,-1) {$\bullet$};
\node at (4,-2.5) {$\bullet$};
\node at (2.8,-4) {$\bullet$};
\node at (2,-6) {$\emptyset$};
\node at (0,-4)[left] {$c_t$};
\node at (0,-2.5)[left] {$c_{t-1}$};
\node at (0,-1)[left] {$c_{t-2}$};
\node at (0,0)[left] {$c_0$};
\draw[dashed] (4,-1)--(10,-1);
\draw[dashed] (4,-2.5)--(10,-2.5);
\node at (5,-2.2) {$\bullet$};
\node at (6,-1.9) {$\bullet$};
\node at (7,-1.6) {$\bullet$};
\node at (8,-1.3) {$\bullet$};
\node at (7,-0.5) {$\emptyset$};
\draw[dashed](8,-1.3)--(8,-4);
\node at (6,-3.25) {$\emptyset$};
\node at (9,-3.25) {$R_0$};
\node at (9,-1.75) {$\emptyset$};
\end{tikzpicture}
\caption{A permutation $w$ of type r$_1$ (left) and the modified permutation $w'=s_t\cdot w_0(J)\cdot w$ (right).}
\label{fig:r1after}
\end{figure}
\end{proof}

We are now ready to prove the implication (\ref{enum:polished-patterns})$\Rightarrow$(\ref{enum:w-is-polished}) by a repeated application of Lemma~\ref{lem:patternToDecomp}. 
\begin{proof}[Proof of implication (\ref{enum:polished-patterns})$\Rightarrow(\ref{enum:w-is-polished})$]
Given $w$ avoiding the six patterns of interest, with $t=t(w)$ and $J=\{s_1,\ldots,s_t\}$, we can obtain $w'\in (\mathfrak{S}_n)_{S'}$ depending on the type of $w$ listed in Table~\ref{tbl:patternDecompose}, by Lemma~\ref{lem:patternToDecomp}.

\begin{table}[h!]
\centering
\begin{tabular}{|c|c|c|}
\hline
type of $w$ & $w'$ & $S'$\\\hline
n & $w_0(J)w=ww_0(J)$ & $\{s_{t+2},\ldots,s_{n-1}\}$\\\hline
r$_0$ & $w_0(J)w$ & $\{s_{t+1},\ldots,s_{n-1}\}$\\\hline
r$_1$ & $s_tw_0(J)w$ & $\{s_{t},\ldots,s_{n-1}\}$\\\hline
l$_0$ & $ww_0(J)$ & $\{s_{t+1},\ldots,s_{n-1}\}$\\\hline
l$_1$ & $ww_0(J)s_t$ & $\{s_{t},\ldots,s_{n-1}\}$\\\hline
\end{tabular}
\caption{A summary of decomposing $w$ after one step}
\label{tbl:patternDecompose}
\end{table}

Continuing with this operation for $w'$ and so on down to the identity, we record each nonempty $J$ as $K^{(1)},K^{(2)},\ldots,K^{(m)}\subset\{s_1,\ldots,s_{n-1}\}$ along the way and assume that $w^{(i)}$ is obtained from $w^{(i-1)}$ as $w'$ is obtained from $w$ above, where we start with $w^{(0)}=w$ and end with $w^{(m)}=\mathrm{id}$. Notice that $J$ is empty if and only if $w(1)=1$, which is equivalent to saying that $w$ is of type n and $t(w)=0$. When $w(1)=1$, we will just consider $w$ as living in the parabolic subgroup generated by $\{s_2,\ldots,s_{n-1}\}$. Assume that $K^{(i)}=\{s_{a_i},\ldots,s_{b_i}\}$, for $a_i\leq b_i$. We label each $K^{(i)}$ by the type of $w^{(i-1)}$. Note that $K^{(m)}$ is of type n.

By Lemma~\ref{lem:patternToDecomp}, if $K^{(i)}$ is of type n, then $b_i<a_{i+1}-1$ which is also saying that any two simple transpositions in $K^{(i)}$ and $K^{(i+1)}$ commute; if $K^{(i)}$ is of type r$_0$ or l$_0$, then $b_i=a_{i+1}-1$ and if $K^{(i)}$ if type r$_1$ or l$_1$, then $b_i=a_{i+1}$ so $K^{(i)}$ and $K^{(i+1)}$ intersects at exactly one position. Moreover, if $K^{(i)}$ is of type r$_1$, then $b_i-a_i\geq1$ and if further $K^{(i+1)}$ is of type l$_1$, then we necessarily have $b_{i+1}-a_{i+1}\geq2$ by Lemma~\ref{lem:patternToDecomp} so that any simple transposition in $K^{(i)}$ and any simple transposition in $K^{(i+2)}$ commute. 

Let $S_1,\ldots,S_k$ be connected components of the Dynkin diagram of $\mathfrak{S}_n$ formed by $K_1,\ldots,K_m$ in this order. We are now going to show that each $S_i$ can be covered by $J_i\cup J_i'$ such that $J_i\cap J_i'$ is totally disconnected and $w$ can be written as the product shown in Definition~\ref{def:almost-parabolic}. This is done by induction on $k$. The base case $k=0$ and $w=\mathrm{id}$ is trivial. Let $S_1=K_1\cup\cdots\cup K_f$. Then $K_1,\ldots,K_{f-1}$ are of types l$_1$ and r$_1$ and are alternating between these two. Without loss of generality, let us assume that $K_1$ is of type r$_1$, since we can invert everything to go from type l$_1$ to type r$_1$. There are the following cases that are almost identical to each other. We will explain the first case in details.

\noindent\textbf{Case 1:} $f=2g-1$ is odd and $K_f$ is of type r$_0$. By a repeated application of Lemma~\ref{tbl:patternDecompose}, we arrive at
\begin{align*}
w^{(f)}=&\big(w_0(K_{2g-1})\big)\big(s_{b_{2g-3}}w_0(K_{2g-3})\big)\cdots\big(s_{b_{3}}w_0(K_{3})\big)\big(s_{b_{1}}w_0(K_{1})\big)w\\
&\big(w_0(K_{2})s_{b_{2}}\big)\big(w_0(K_{4})s_{b_{4}}\big)\cdots \big(w_0(K_{2g-2})s_{b_{2g-2}}\big),\\
w=&\big(w_0(K_1)s_{b_1}\big)\big(w_0(K_3)s_{b_3}\big)\cdots\big(w_0(K_{2g-3})s_{b_{2g-3}}\big)\big(w_0(K_{2g-1})\big)w^{(f)}\\
&\big(s_{b_{2g-2}}w_0(K_{2g-2})\big)\cdots\big(s_{b_{4}}w_0(K_{4})\big)\big(s_{b_{2}}w_0(K_{2})\big).
\end{align*}
Recall that if $j-i\geq2$, then $a_j-b_i\geq2$ so any $u$ in the parabolic subgroup generated by $K_j$ would commute with any $v$ in the parabolic subgroup generated by $K_i$.
Inside the above expression for $w$, $w^{(f)}$ commutes with all the factors on the right hand side so we can move it all the way to the right. We can also move all the $w_0(K_{2i-1})$'s all the way to the left and similarly move all the $w_0(K_{2i})$'s all the way to the right, leaving the $s_{b_i}$'s in the middle. Let $J=K_1\cup K_3\cup\cdots K_{2g-1}$, $J'=K_2\cup K_4\cdots K_{2g-2}$ so that $J\cap J'=\{b_1,b_2,\ldots,b_{f-1}\}$ is totally disconnected. We have that $w=w_0(J)w_0(J\cap J')w_0(J')w^{(f)}$. 

\noindent\textbf{Case 2:} $f=2g-1$ is odd and $K_f$ is of type l$_0$. Then
\begin{align*}
w=&\big(w_0(K_1)s_{b_1}\big)\big(w_0(K_3)s_{b_3}\big)\cdots\big(w_0(K_{2g-3})s_{b_{2g-3}}\big)w^{(f)}\big(w_0(K_{2g-1})\big)\\
&\big(s_{b_{2g-2}}w_0(K_{2g-2})\big)\cdots\big(s_{b_{4}}w_0(K_{4})\big)\big(s_{b_{2}}w_0(K_{2})\big).
\end{align*}
Now we can commute $w^{(f)}$ all the way to the left instead. Also let $J=K_1\cup K_3\cup\cdots K_{2g-1}$, $J'=K_2\cup K_4\cdots K_{2g-2}$ so that
\[
w=w^{(f)}w_0(J)w_0(J\cap J')w_0(J').
\]

\noindent\textbf{Case 3:} $f=2g$ is even and $K_f$ is of type r$_0$. Then
\begin{align*}
w=&\big(w_0(K_1)s_{b_1}\big)\big(w_0(K_3)s_{b_3}\big)\cdots\big(w_0(K_{2g-1})s_{b_{2g-1}}\big)\big(w_0(K_{2g})\big)w^{(f)}\\
&\big(s_{b_{2g-2}}w_0(K_{2g-2})\big)\cdots\big(s_{b_{4}}w_0(K_{4})\big)\big(s_{b_{2}}w_0(K_{2})\big).
\end{align*}
Let $J=K_1\cup K_3\cup\cdots K_{2g-1}$, $J'=K_2\cup K_4\cdots K_{2g}$. We have
\[
w=w_0(J)w_0(J\cap J')w_0(J')w^{(f)}.
\]

\noindent\textbf{Case 4:} $f=2g$ is even and $K_f$ is of type l$_0$. Then
\begin{align*}
w=&\big(w_0(K_1)s_{b_1}\big)\big(w_0(K_3)s_{b_3}\big)\cdots\big(w_0(K_{2g-1})s_{b_{2g-1}}\big)w^{(f)}\big(w_0(K_{2g})\big)\\
&\big(s_{b_{2g-2}}w_0(K_{2g-2})\big)\cdots\big(s_{b_{4}}w_0(K_{4})\big)\big(s_{b_{2}}w_0(K_{2})\big).
\end{align*}
Let $J=K_1\cup K_3\cup\cdots K_{2g-1}$, $J'=K_2\cup K_4\cdots K_{2g}$. We have 
\[
w=w^{(f)}w_0(J)w_0(J\cap J')w_0(J').
\]

The cases where $K_f$ is of type n can be done in the exact same way as either $K_f$ is of type r$_0$ or l$_0$. Continuing with the next connected components in $\{K_{f+1},\ldots,K_m\}$ and so on, we deduce that $w$ has the same form as in Definition~\ref{def:almost-parabolic} so it is polished.
\end{proof}
\begin{remark}
In this section, the purpose of distinguishing between type l and r is to specify the order of multiplying permutations in the decomposition of $w$. This order can also be seen as governed by the staircase diagram introduced by Richmond and Slofstra \cite{Richmond-Slofstra-staircase-diagrams}. We did not discuss the notion of staircase diagrams since they were not needed in full generality. 
\end{remark}

\subsection{Proof of direction (\ref{enum:w-is-polished})$\Rightarrow$(\ref{enum:interval-self-dual})}
\label{sec:proof-of-3-4}

We now prove the implication (\ref{enum:w-is-polished})$\Rightarrow$(\ref{enum:interval-self-dual}) for general finite Coxeter groups $W$.  Throughout this section $s_1 \ldots s_n$ is a generic reduced expression; we drop the convention from the previous section that $s_i$ is the specific simple reflection $(i \: i+1)$.

\begin{prop} \label{prop:different-support-gives-product}
Suppose that for $w \in W$ we can write $w=uv$ with $\supp(u) \cap \supp(v) = \emptyset$, then 
\[
[e,w] \cong [e,u] \times [e,v].
\]
\end{prop}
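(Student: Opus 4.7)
The plan is to apply the subword property (Proposition~\ref{prop:subword-property}) together with uniqueness of parabolic decompositions. Let $J = \supp(v)$. Since $\supp(u) \cap J = \emptyset$, no reduced expression of $u$ ends in an element of $J$, so $u \in W^J$; thus $w = uv$ is the parabolic decomposition of $w$ with respect to $J$, and in particular $\ell(w) = \ell(u) + \ell(v)$. Fix reduced expressions $u = s_{i_1} \cdots s_{i_p}$ and $v = s_{j_1} \cdots s_{j_q}$, so that their concatenation is a reduced expression for $w$. Moreover, the hypothesis $\supp(u) \cap J = \emptyset$ trivially makes $w = uv$ a BP-decomposition, so Proposition~\ref{prop:bp-implies-parabolic-is-maximal} yields $v = m(w,J)$ and hence $[e,w] \cap W_J = [e,v]$.

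Next I would show that the map $\phi \colon [e,u] \times [e,v] \to [e,w]$, $(a,b) \mapsto ab$, is a bijection. For well-definedness, given $a \leq u$ and $b \leq v$, the subword property furnishes reduced subexpressions for $a$ inside that of $u$ and for $b$ inside that of $v$; their concatenation is a subword of the chosen reduced expression for $w$, and the same argument as in the first paragraph (applied to $a$, $b$ in place of $u$, $v$) shows it is a reduced expression for $ab$, so $ab \leq w$. Injectivity follows immediately from uniqueness of parabolic decompositions: if $ab = a'b'$ with $a,a' \in W^J$ and $b,b' \in W_J$, then $a = a'$ and $b = b'$. For surjectivity, given $x \leq w$ the subword property extracts a reduced expression for $x$ as a subword of the concatenated reduced expression for $w$, and splitting it into its $u$-part and $v$-part writes $x = ab$ with $a \leq u$ and $b \leq v$.

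The main step, and the place I expect the main obstacle, is showing that $\phi$ is an order isomorphism, specifically the direction $ab \leq a'b' \Longrightarrow a \leq a'$ and $b \leq b'$, since the projection $x \mapsto x_J$ onto $W_J$ is not order-preserving in general. The forward direction is a direct concatenation of subword expressions. For the reverse, I apply the subword property to the reduced expression for $a'b'$ obtained by concatenating reduced expressions for $a'$ and $b'$: this yields a reduced subword expression for $ab$ that splits as $\alpha\beta$, giving $ab = a''b''$ with $a'' \leq a'$ and $b'' \leq b'$. Since $a'' \in W^J$, $b'' \in W_J$, and $\ell(a'') + \ell(b'') = \ell(ab)$, uniqueness of parabolic decomposition forces $a'' = a$ and $b'' = b$, yielding $a \leq a'$ and $b \leq b'$ as required. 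Together these steps show $\phi$ is a poset isomorphism.
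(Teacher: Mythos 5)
Your proof is correct and takes essentially the same route as the paper's: set $J=\supp(v)$, concatenate reduced expressions for $u$ and $v$, and use the subword property to split reduced subwords of the concatenation into their $S\setminus J$-part and $J$-part. The BP-decomposition detour is unnecessary, but your careful handling of the reverse order-implication via uniqueness of parabolic decompositions spells out a step the paper leaves implicit.
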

\begin{proof}
Let $J=\supp(v)$; since $D_R(u) \subseteq \supp(u) \subseteq S \setminus J$, we have $u \in W^J$, so in particular $\ell(w)=\ell(u)+\ell(v)$.  Let $u=s'_1 \cdots s'_m$ and $v=s_1 \cdots s_n$ be reduced expressions, then 
\[
w=s'_1 \cdots s'_m s_1 \cdots s_n
\]
is a reduced expression for $w$, with all $s'_i \in S \setminus J$ and all $s_j \in J$.  By Proposition \ref{prop:subword-property}, $[e,w]$ is the set of all reduced subwords of this word ordered by containment as subwords.  Any subword $\sigma$ of $s'_1 \cdots s'_m s_1 \cdots s_n$ consists of some elements of $S \setminus J$ followed by some elements of $J$, and by the above argument $\sigma$ is reduced if and only if each of these segments is reduced.  Thus multiplication gives an isomorphism of posets $[e,u] \times [e,v] \to [e,w]$.
\end{proof}

As products of self-dual posets are clearly self-dual, Proposition \ref{prop:different-support-gives-product} implies that it suffices to prove the implication (\ref{enum:w-is-polished})$\Rightarrow$(\ref{enum:interval-self-dual}) in the case where the polished element $w$ has a single block $S_1 = S$.  For the remainder of this section, let $w=w_0(J) \cap w_0(J \cap J') w_0(J')$ with $S=J \cup J'$ and $J \cap J'$ totally disconnected be such a polished element of $(W,S)$.

\begin{lemma} \label{lem:parabolic-decomp-of-almost-parabolic}
With $w=w_0(J)w_0(J \cap J')w_0(J')$ as above, we have 
\begin{align*}
    w_{J'}&=w_0(J'), \\
    w^{J'}&=w_0(J)w_0(J \cap J'),
\end{align*}
and this decomposition $w=w^{J'}w_{J'}$ is a BP-decomposition.
\end{lemma}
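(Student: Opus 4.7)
The plan is to verify the three assertions in order, using only the standard facts about the longest element of a parabolic subgroup together with the uniqueness of the parabolic decomposition.

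First I would show that $u := w_0(J) w_0(J \cap J')$ lies in $W^{J'}$. Since $J \cap J' \subseteq J$, the element $w_0(J \cap J')$ belongs to $W_J$, so $u \in W_J$ and hence $D_R(u) \subseteq \supp(u) \subseteq J$. To pin down $D_R(u)$ inside $J$, I would invoke the standard identity that for any $v \in W_J$ one has $\ell(w_0(J) v) = \ell(w_0(J)) - \ell(v)$ and consequently $D_R(w_0(J) v) \cap J = J \setminus D_R(v)$: for $s \in J$, the length change from $w_0(J) v$ to $w_0(J) v s$ is the negative of the length change from $v$ to $vs$. Applying this with $v = w_0(J \cap J')$, whose right descent set is $J \cap J'$, yields $D_R(u) = J \setminus (J \cap J') = J \setminus J' \subseteq S \setminus J'$, so $u \in W^{J'}$.

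Second, from $w = w_0(J) w_0(J \cap J') w_0(J') = u \cdot w_0(J')$ with $u \in W^{J'}$ and $w_0(J') \in W_{J'}$, the uniqueness of the parabolic decomposition $w = w^{J'} w_{J'}$ forces $w^{J'} = u = w_0(J) w_0(J \cap J')$ and $w_{J'} = w_0(J')$, and this factorization is automatically length-additive.

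Finally, for the Billey--Postnikov property I would just check the defining inclusion directly: $\supp(w^{J'}) \subseteq J$ gives $\supp(w^{J'}) \cap J' \subseteq J \cap J'$, while $D_L(w_{J'}) = D_L(w_0(J')) = J'$, so $\supp(w^{J'}) \cap J' \subseteq J \cap J' \subseteq J' = D_L(w_{J'})$. I do not anticipate a real obstacle; the only step that deserves a careful sentence is the descent identity $D_R(w_0(J) v) \cap J = J \setminus D_R(v)$, which is where all of the content of the computation is concentrated. Note that the disconnectedness of $J \cap J'$ is not actually used in this lemma; it will presumably be needed only in subsequent lemmas that analyze the interval $[e, w_0(J')]$ or iterate this BP-decomposition.
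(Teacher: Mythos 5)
Your proof is correct and follows essentially the same route as the paper: show $w_0(J)w_0(J\cap J')\in W^{J'}$, invoke uniqueness of the length-additive parabolic factorization, and observe that the BP condition holds trivially because $D_L(w_0(J'))=J'$. The only cosmetic difference is that you establish membership in $W^{J'}$ via the descent-complementation identity $D_R(w_0(J)v)\cap J=J\setminus D_R(v)$, whereas the paper extracts a length-additive expression $w_0(J)=s_1\cdots s_k\,w_0(J\cap J')$ from the weak order; both rest on the same fact that $\ell(w_0(J)v)=\ell(w_0(J))-\ell(v)$ for $v\in W_J$.
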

\begin{proof}
We know $w_0(J) \geq_L w_0(J \cap J')$ since $w_0(J)$ is the unique maximal element of $W_J$ under weak order, thus we may write 
\[
w_0(J)=s_1 \cdots s_k w_0(J \cap J')
\]
with lengths adding, for some reduced expression $s_1 \cdots s_k$ with each $s_i \in J$.  Since $w_0(J \cap J')$ is an involution, we see that 
\[
w_0(J)w_0(J \cap J')=s_1 \cdots s_k;
\]
furthermore, since $s_1 \cdots s_k w_0(J \cap J')$ was length-additive, we know that
\[
D_R(s_1 \cdots s_k) \cap (J \cap J') = \emptyset.
\]  
As $D_R(s_1 \cdots s_k) \subseteq J$, we conclude that $w_0(J)w_0(J \cap J')=s_1 \cdots s_k \in W^{J'}$.  Now,
\[
w=s_1 \cdots s_k w_0(J')
\]
is length-additive, so by uniqueness of parabolic decompositions we conclude $w_{J'}=w_0(J')$ and $w^{J'}=w_0(J)w_0(J \cap J')$.  Finally, it is trivially true that 
\[
(\supp(w^{J'}) \cap J') \subseteq J' = D_L(w_{J'}),
\]
so this is a BP-decomposition.
\end{proof}

Proposition 4.2 of \cite{Richmond-Slofstra-fibre-bundles} strengthens the following lemma, whose short proof we include for convenience:

\begin{lemma} \label{lem:bp-implies-interval-contains-product}
Let $u \in W$ and $K \subseteq S$ be such that $u=u^K u_K$ is a BP-decomposition, then the multiplication map
\[
[e,u^K]^K \times [e,u_K] \to [e,u]
\]
is an order-preserving bijection.
\end{lemma}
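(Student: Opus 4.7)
The plan is to verify the four properties of the multiplication map $\mu \colon [e,u^K]^K \times [e,u_K] \to [e,u]$: well-definedness, injectivity, surjectivity, and order-preservation.

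First I would show that $\mu$ is well-defined. Given $(x,y) \in [e,u^K]^K \times [e,u_K]$, the product $xy$ is length-additive because $x \in W^K$ and $y \in W_K$, so concatenating reduced expressions for $x$ and $y$ yields a reduced expression for $xy$. Doing the same for $u^K$ and $u_K$, and using the hypothesis $x \leq u^K$, $y \leq u_K$ together with Proposition~\ref{prop:subword-property}, one gets a reduced subword of a reduced expression for $u=u^K u_K$ realizing $xy$, so $xy \leq u$. Injectivity is immediate from the uniqueness of parabolic decompositions: if $xy = x'y'$ with $x,x' \in W^K$ and $y,y' \in W_K$, then $(x,y)=(x',y')$.

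Next I would tackle surjectivity, which is the crux of the argument and the place where the BP-decomposition hypothesis is essential. Given $v \in [e,u]$, write $v = v^K v_K$ in its parabolic decomposition. Proposition~\ref{prop:parabolic-projection-preserves-bruhat} gives $v^K \leq u^K$, so to place $v^K$ in $[e,u^K]^K$ it remains only to note $v^K \in W^K$ by definition. For the parabolic part, observe that $v_K \leq v$ (since a reduced word for $v$ ends with a reduced word for $v_K$, apply the subword property), hence $v_K \in [e,u] \cap W_K$. By Proposition~\ref{prop:bp-implies-parabolic-is-maximal}, the BP-hypothesis yields $u_K = m(u,K)$, and by definition $[e,u] \cap W_K = [e, m(u,K)] = [e,u_K]$, so $v_K \leq u_K$. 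Therefore $v = v^K v_K$ is in the image of $\mu$.

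Finally, order-preservation is handled by a subword argument: if $(x,y) \leq (x',y')$ in the product order on $[e,u^K]^K \times [e,u_K]$, pick reduced expressions for $x'$ and $y'$ containing reduced subwords for $x$ and $y$ respectively; the concatenated expression is a reduced expression for $x'y'$ containing a reduced subword for $xy$, so $xy \leq x'y'$ in Bruhat order by Proposition~\ref{prop:subword-property}.

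The main obstacle is the surjectivity step, and more precisely the inequality $v_K \leq u_K$: without the BP hypothesis the intersection $[e,u] \cap W_K$ can fail to be an interval $[e,u_K]$, so Proposition~\ref{prop:bp-implies-parabolic-is-maximal} is what makes the argument go through and explains why the lemma is specific to BP-decompositions.
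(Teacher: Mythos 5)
Your proof is correct and follows essentially the same route as the paper's: injectivity from uniqueness of parabolic decompositions, surjectivity by combining Proposition~\ref{prop:parabolic-projection-preserves-bruhat} for the quotient part with Proposition~\ref{prop:bp-implies-parabolic-is-maximal} for the parabolic part, and order-preservation via length-additivity and Proposition~\ref{prop:subword-property}. You merely spell out the well-definedness and subword details that the paper leaves implicit.
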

\begin{proof}
The map is injective by the uniqueness of parabolic decompositions.  To see surjectivity, suppose that $v \in [e,u]$, then by Proposition \ref{prop:parabolic-projection-preserves-bruhat} we have that $v^K \leq u^K$. On the other hand, by Proposition \ref{prop:bp-implies-parabolic-is-maximal}, we have $v_K \leq u_K$, since $v_K \leq v \leq u$ and $v_K \in W_K$.  Thus $v=v^Kv_K$ is in the image.  The order-preserving property is immediate from the fact that all products are length-additive and the subword characterization of Bruhat order in Proposition \ref{prop:subword-property}.
\end{proof}

\begin{remark} \label{rem:not-equal-to-product}
A word of caution when reading Lemma \ref{lem:bp-implies-interval-contains-product}: except in very special cases it is \emph{not} true that $[e,u^K]^K \times [e,u_K]$ and $[e,u]$ are isomorphic as posets, as $[e,u]$ may contain extra order relations not coming from the product.
\end{remark}

We are now ready to prove the implication (\ref{enum:w-is-polished})$\Rightarrow$(\ref{enum:interval-self-dual}) from Theorem \ref{thm:main}.

\begin{proof}[Proof of implication (\ref{enum:w-is-polished})$\Rightarrow$(\ref{enum:interval-self-dual}) from Theorem \ref{thm:main}]
Let $w$ be a polished element of $W$ with
\[
w=w_0(J)w_0(J \cap J')w_0(J'),
\]
we want to show that the interval $[e,w]$ is self-dual by exhibiting an explicit bijection $[e,w] \to [e,w]$ sending $u \mapsto u^{\vee}$ such that $u \leq v$ if and only if $v^{\vee} \leq u^{\vee}$ (an antiautomorphism).

We observe that 
\[
w^{J'}=w_0(J)w_0(J \cap J')=w_0(J)^{J\cap J'}.
\]
If $u \in [e,w_0(J)^{J \cap J'}]$, then $\supp(u) \subseteq J$, so $D_R(u) \subseteq J$.  Thus if $u \in W^{J \cap J'}$ we have in fact that $u \in W^{J'}$.  Thus we have that
\[
[e,w_0(J)w_0(J\cap J')]^{J'}=[e,w_0(J)^{J \cap J'}]^{J'}=[e,w_0(J)^{J \cap J'}]^{J \cap J'}=W_J^{J \cap J'}.
\]
Clearly we also have $[e,w_0(J')]=W_{J'}$ and so by Lemmas \ref{lem:parabolic-decomp-of-almost-parabolic} and \ref{lem:bp-implies-interval-contains-product} multiplication is an order preserving bijection
\[
W_J^{J \cap J'} \times W_{J'} \to [e,w].
\]

It is well known that $W_J^{J \cap J'}$ and $W_{J'}$ are self-dual as posets under Bruhat order with duality maps $u \mapsto w_0(J)uw_0(J \cap J')$ and $u \mapsto uw_0(J')$ respectively (see \cite{Bjorner-Brenti}).  This suggests the duality map
\[
u \mapsto u^{\vee} := w_0(J) u^{J'} w_0(J \cap J') \cdot u_{J'} w_0(J')
\]
for $[e,w]$.  Note that, by Remark \ref{rem:not-equal-to-product}, we still need to check whether this map is indeed an antiautomorphism of $[e,w]$ (indeed, up to this point we have not needed the assumption that $J \cap J'$ is totally disconnected).

Suppose we have a cover relation $u \lessdot v$ in $[e,w]$; to complete the proof we need to show that $v^{\vee} \lessdot u^{\vee}$.  Choose reduced decompositions of $v^{J'}$ and $v_{J'}$ to get a reduced decomposition 
\[
v=v^{J'}v_{J'}=(s_1 \cdots s_k) (s'_1 \cdots s'_{k'}).
\]
By Proposition \ref{prop:subword-property}, we know $u$ has a reduced decomposition obtained by omitting one of the simple generators above.  If the generator omitted is one of the $s'_i$, then we have $u^{J'}=v^{J'}$ and $u_{J'} \lessdot v_{J'}$ because $W_{J'}$ is an order ideal under Bruhat order.  In this case, the fact that our duality map is known to be an antiautomorphism for $W_J^{J \cap J'} \times W_{J'}$ implies that $v^{\vee} \lessdot u^{\vee}$.  

The case where the omitted generator is one of the $s_i$ needs another argument, as $W_J^{J \cap J'}$ is not an order ideal (so $u^{J'}$ may not equal $s_1 \cdots \hat{s_i} \cdots s_k$).  Suppose we are in this case, with $v^{J'}=s_1 \cdots s_k$, $v_{J'}=s'_1 \cdot s'_k$, and 
\[
u=s_1 \cdots \hat{s_i} \cdots s_k s'_1 \cdots s'_{k'},
\]
and all of these expressions reduced, and let $z=s_1 \cdots \hat{s_i} \cdots s_k$.  For convenience, we write $x$ for $_{J \cap J'}(v_{J'})$ and $y$ for $^{J \cap J'}(v_{J'})$ (so $xy=v_{J'}$ with lengths adding). Then we have length-additive products
\begin{align}
    v &= v^{J'} x y \label{eq:v-decomp} \\
    u &= z^{J'} z_{J'} x y \label{eq:u-decomp}.
\end{align}
Since $z_{J'}, x,$ and $y$ are all in $W_{J'}$, so is their product.  And since the above decomposition $u=z^{J'} (z_{J'}xy)$ is length-additive, uniqueness of parabolic decompositions implies that $z^{J'}=u^{J'}$ and $z_{J'}xy=u_{J'}$.  Also, because $y \in ^{J \cap J'}W_{J'}$ has no left descents from $J \cap J'$, we know that $yw_0(J')$ has all elements of $J \cap J'$ as descents, and therefore $y \geq_R w_0(J \cap J')$, so we may write $yw_0(J)=w_0(J \cap J')y'$ for some element $y'$ with $\ell(y)=\ell(w_0(J \cap J'))+\ell(y')$.

Now, we have
\begin{align*}
    u^{\vee}&=w_0(J) u^{J'} w_0(J \cap J') u_{J'} w_0(J') \\
    &=w_0(J) u^{J'} w_0(J \cap J') z_{J'} x y w_0(J') \\
    &=w_0(J) u^{J'} w_0(J \cap J') z_{J'} x w_0(J \cap J') y' \\
    &=w_0(J) u^{J'} z_{J'} x y' \\
\end{align*}
where in the last step we have used that $z_{J'}x \in W_{J \cap J'}$, which is abelian by our assumption that $J \cap J'$ is totally disconnected, and therefore commutes with $w_0(J \cap J')$.  Similarly, we have
\[
v^{\vee}=w_0(J)v^{J'}xy'.
\]

In the following computation, we write $N_K$ for $\ell(w_0(K))$ for any subset $K \subseteq S$.  Computing lengths, we have 
\begin{align*}
    \ell(u^{\vee})&= \ell(w)-\ell(u) \\
    &=(N_J+N_{J'}-N_{J \cap J'})-(\ell(u^{J'})+\ell(z_{J'})+\ell(x)+\ell(y)) \\
    &= (N_J-\ell(u^{J'})-\ell(z_{J'})-\ell(x))+\ell(y')
\end{align*}
where in the first step we have used the length-additive decomposition (\ref{eq:u-decomp}) and in the second we have used the fact that $yw_0(J')=w_0(J \cap J')y'$ with the right-hand-side being length-additive, and the left-hand-side having length $N_{J'}-\ell(y)$.  This implies that 
\[
u^{\vee}=(w_0(J)u^{J'}z_{J'}x) \cdot y'
\]
is length-additive.  A similar calculation shows that 
\[
v^{\vee}=(w_0(J)v^{J'}x) \cdot y'
\]
is also length-additive.  Thus $v^{\vee} \lessdot u^{\vee}$ if and only if 
\[
w_0(J)v^{J'}x \lessdot w_0(J)u^{J'}z_{J'}x,
\]
which, because $w_0(J)$ is an antiautomorphism of Bruhat order on $W_J$, is true in turn if and only if $u^{J'}z_{J'}x \lessdot v^{J'}x$.  These decompositions are length-additive, as they come from parabolic decompositions, thus we need to check that $u^{J'}z_{J'} \lessdot v^{J'}$.  Finally we see this is true by recalling that 
\[
u^{J'}z_{J'}=z^{J'}z_{J'}=z=s_1 \cdots \hat{s_i} \cdots s_k
\]
and $v^{J'}=s_1 \cdots s_k$.  This completes the proof of implication (\ref{enum:w-is-polished})$\Rightarrow$(\ref{enum:interval-self-dual}).
\end{proof}

\subsection{Proof of Theorem~\ref{thm:our-top-heavy}}
We obtain Theorem~\ref{thm:our-top-heavy} as a corollary of the already established Theorem~\ref{thm:main}, with technology similar to that of Section~\ref{sub:1-2}.
\begin{proof}[Proof of Theorem~\ref{thm:our-top-heavy}]
Let $w$ be smooth so that it avoids 3412 and 4231. We will show that if $w$ contains one of the patterns 34521, 45321, 54123 and 54312, then 
\[
\max_{u\in P_1^w}\udeg_w(u)<\max_{u\in P_{\ell(w)-1}^w}\ddeg_w(u).
\]
On the other hand, we know from Theorem~\ref{thm:main} that if $w$ avoids these patterns, then $[e,w]$ in the Bruhat order is self-dual and clearly
\[
\max_{u\in P_1^w}\udeg_w(u)=\max_{u\in P_{\ell(w)-1}^w}\ddeg_w(u).
\]
Thus, throughout the rest of the proof, assume that $w$ contains one of 34521, 45321, 54123 or 54312. 

We use induction on $n$ to show that for any $u\in P_1^w$, $\udeg_w(u)-|P_1^w|\leq1$, and that there exists some $u\in P_{\ell(w)-1}^w$ such that $\ddeg_w(u)-|P_1^w|\geq2$. This statement suffices for the sake of the theorem.

We first reduce to the case where $w$ does not lie in any proper parabolic subgroup of $\mathfrak{S}_n$, or in other words, $\bl(w)=1$, with the notation defined in Section~\ref{sub:1-2}. Let $b=\bl(w)\geq2$ and $w=w^{(1)}\oplus\cdots\oplus w^{(b)}$. Now the Bruhat interval can be factored as 
\[
[e,w] \cong [e,w^{(1)}]\times\cdots \times[e,w^{(b)}].
\]
Each factor $w^{(i)}$ avoids 3412 and 4231 and is thus smooth, so that $[e,w^{(i)}]$ is rank symmetric. Take $u\in[e,w]$ and write it as $u^{(1)}\oplus\cdots\oplus u^{(b)}$ corresponding to the decomposition of $w$. If $\ell(u)=1$, there exists some $j\in\{1,\ldots,b\}$ such that $u^{(i)}=e$ for all $i\neq j$. Then
$$\udeg_w(u)=\sum_{i\neq j}|P_1^{w(i)}|+\udeg_{w^{(j)}}(u^{(j)})=|P_1^w|+\udeg_{w^{(j)}}(u^{(j)})-|P_1^{w(j)}|.$$
By the induction hypothesis, $\udeg_{w^{(j)}}(u^{(j)})-|P_1^{w(j)}|\leq1$ so $\udeg_w(u)-|P_1^w|\leq1$. On the other hand, since all the four patterns of interest do not lie in any proper parabolic subgroup of $\mathfrak{S}_4$, there exists some $w^{(j)}$ containing one of the patterns. By induction hypothesis, there exists some $u^{(j)}\in P_{\ell(w^{(j)})-1}^{w^{(j)}}$ such that $\ddeg_{w^{(j)}}(u^{(j)})-|P_1^{w^{(j)}}|\geq2$. Construct $u=u^{(1)}\oplus\cdots\oplus u^{(b)}\in P_{\ell(w)-1}^w$ where $u^{(i)}=w^{(i)}$ for $i\neq j$. Similarly, we see that
\begin{align*}
\ddeg_w(u)=&\sum_{i\neq j}|P_{\ell(w^{(i)})-1}^{w^{(i)}}|+\ddeg_{w^{(j)}}(u^{(j)})\\
\geq&\sum_{i\neq j}|P_{1}^{w^{(i)}}|+|P_1^{w^{(j)}}|+2\\
=&|P_1^w|+2.
\end{align*}

Now we know that $w$ does not lie in any proper parabolic subgroup of $\mathfrak{S}_n$. This means $P_1^w=\{s_1,\ldots,s_{n-1}\}$ contains all simple transpositions. For any $s_i$, the permutations that cover $s_i$ in $P_2^w$ are contained in $$\{s_1s_i,s_2s_i,\ldots,s_{i-1}s_i,s_{i+1}s_i,\ldots,s_{n-1}s_i\}\cup\{s_is_{i-1},s_is_{i+1}\}$$
which has cardinality $n$ if $i\in\{2,\ldots,n-2\}$ and cardinality $n-1$ if $i=1,n-1$. As a result, $\udeg_w(u)\leq n$ for all $u\in P_1^w$. In other words, $\udeg_w(u)-|P_1^w|\leq1$.

Next, we obtain a lower bound of $n+1$ for $\ddeg_w(u)$ for some $u\in P^w_{\ell(w)-1}$. Recall the notion of a \textit{minimal inversion} from Definition~\ref{def:minimal-inversion}. The number of minimal inversions of $w$ is exactly $|P_{\ell(w)-1}^w|=|P_1^w|=n-1$. Suppose that $(i_1,j_1)$ and $(i_2,j_2)$ are two minimal inversions of $w$ with $i_1\leq i_2$, we claim that there exists some $v\in[e,w]$ covered by both $wt_{i_1j_1}$ and $wt_{i_2j_2}$ in the Bruhat order. Consider the following cases. If $\{i_1,j_1\}$ and $\{i_2,j_2\}$ are not disjoint, then either $j_1=j_2$ or $i_1=i_2$ or $i_2=j_1$. If $j_1=j_2$, then $w(i_1)<w(i_2)$ by minimality, and $v=wt_{i_1j_1}t_{i_2j_2}=wt_{i_2j_2}t_{i_1i_2}$ is covered by both. The case $i_1=i_2$ is the same. And if $i_2=j_1$, then by Lemma~\ref{lem:adj-labels}, there are two such $v$'s that serve the purpose. If $\{i_1,j_1\}$ and $\{i_2,j_2\}$ are disjoint, then $t_{i_1j_1}$ and $t_{i_2j_2}$ commute. Pictorially, we just need to check that in the permutation diagram, the rectangle formed by $(i_1,w(i_1))$ and $(j_1,w(j_1))$ is disjoint from the rectangle formed by $(i_2,w(i_2))$ and $(j_2,w(j_2))$ so that $v=wt_{i_1j_1}t_{i_2j_2}$ is covered by both $wt_{i_1j_1}$ and $wt_{i_2j_2}$. These two rectangles overlap precisely when $i_1<i_2<j_1<j_2$ and $w(i_2)<w(i_1)>w(j_2)>w(j_1)$. However, in this case, $w$ contains 3412 at indices $i_1,i_2,j_1,j_2$, contradicting $w$ being smooth.

Fix a minimal inversion $(p,q)$ of $w$. For other $n-2$ minimal inversions $(i,j)$, let $V_{(i,j)}=\{v\in P^w_{\ell(w)-2}\:|\: v<wt_{pq},v<wt_{ij}\}$. Since every Bruhat interval of rank 2 is isomorphic to a diamond (see for example \cite{Bjorner-Brenti}), we know that every $v\in P^w_{\ell(w)-2}$ such that $v<wt_{pq}$ belongs to exactly one of $V_{(i,j)}$'s. This means $\ddeg_w(wt_{pq})$ is the sum of $|V_{(i,j)}|$'s. Moreover, we have seen that $|V_{(i,j)}|\geq1$ for all minimal inversions $(i,j)\neq(p,q)$ from the previous paragraph and that $|V_{(i,j)}|\geq2$ if $i=q$ or $j=p$ from Lemma~\ref{lem:adj-labels}. As a result, if there are at least three minimal inversions $(i,j)$ of $w$ such that $i=q$ or $j=p$, we know that $\ddeg_w(wt_{pq})\geq n+1$.

We apply arguments as in the proof of Lemma~\ref{lem:length5patterns}. If $w$ contains 45321, take a minimal pattern 45321 in the sense of Definition~\ref{def:pattern-contain} at indices $a_1<a_2<a_3<a_4<a_5$ as in Figure~\ref{fig:45321} where all the regions $A_{*,*}$'s are empty. Let $(p,q)=(a_3,a_4)$. Since $(a_1,a_3)$, $(a_2,a_3)$ and $(a_4,a_5)$ are all minimal inversions, we know that $\ddeg_w(wt_{pq})\geq n+1$. The case of 54312, which is the inverse of 45321, is the same. If $w$ avoids 45321 and 54312 but contains 34521, we take a minimal pattern as in Figure~\ref{fig:34521}. With notations in the proof of Lemma~\ref{lem:length5patterns}, we let $(p,q)=(a_4,a_5)$. Since $(a_3,a_4)$, $(a_1,a_4)$ and $(c_k,a_4)$ are all minimal inversions, we also conclude that $\ddeg_w(wt_{pq})\geq n+1$. The case of 54123, which is the inverse of 34521, is the same. In both cases, $\ddeg_w(wt_{pq})\geq n+1$ so we are done.
\end{proof}

\section{Discussion of other types}
\label{sec:counterexamples}

Theorem \ref{thm:main} fails in general finite Coxeter groups, in particular we have the following counterexamples for (\ref{enum:graphs-isomorphic})$\Rightarrow$(\ref{enum:interval-self-dual}) and (\ref{enum:interval-self-dual})$\Rightarrow$(\ref{enum:w-is-polished}):

\begin{itemize}
    \item For $(W,S)$ if type $B_3$ with generators chosen so that $(s_1s_2)^3=e$ and $(s_2s_3)^4=e$, the element 
    \[
    w=s_3s_2s_3s_1s_2s_3s_1s_2
    \]
    has $\Gamma_w \cong \Gamma^w$, but $[e,w]$ is not self-dual.
    \item The two elements of length three in $W$ of type $B_2$ have $[e,w]$ self-dual, but are not polished.
\end{itemize}

There is a notion of pattern avoidance for general finite Weyl groups (see \cite{Billey-Postnikov}).  This notion was introduced by Billey and Postnikov in order to give a generalization of the Lakshmibai-Sandhya smoothness criterion for Schubert varieties.  We do not know whether self-dual Bruhat intervals in types other than $A_{n-1}$ are characterized by pattern avoidance.

\begin{quest}
Is the set of elements $w$ of finite Weyl groups such that $[e,w]$ is self-dual characterized by pattern avoidance in the sense of \cite{Billey-Postnikov} as in (\ref{enum:w-is-polished})?
\end{quest}

\section*{Acknowledgements}

We are grateful to Sara Billey for suggesting that self-dual intervals may be characterized by pattern avoidance.  We also wish to thank Alexander Woo for providing helpful references and Alexander Postnikov and Thomas Lam for their suggestions.

\bibliographystyle{plain}
\bibliography{strongdual}
\end{document}